\documentclass[a4paper,12pt]{article}

\usepackage[utf8]{inputenc}
\usepackage[english]{babel}
\usepackage{amsmath, amssymb, amsthm}
\usepackage{enumerate}
\usepackage[colorlinks=true,linkcolor=blue,citecolor=blue]{hyperref}

\usepackage{tikz}
\usepackage{mdframed}
\usepackage{pgfplots}
\pgfplotsset{width=7cm,compat=1.10}
\pgfplotsset{
  /pgfplots/colormap={pink}{%
    color(0cm) = (blue);
    color(1cm) = (cyan!50!blue);
    color(2cm) = (cyan!50);
    color(3cm) = (cyan) }
}
\usetikzlibrary{calc,arrows,decorations.pathreplacing,fadings,3d,positioning}

\title{On the spectrum of Diophantine exponents of lattices}
\author{Oleg\,N.\,German}
\date{}

\theoremstyle{definition}
\newtheorem{definition}{Definition}

\newtheorem*{notation*}{Notation}

\theoremstyle{remark}

\newtheorem*{remark*}{Remark}

\theoremstyle{plain}
\newtheorem{theorem}{Theorem}
\newtheorem*{theorem*}{Theorem}
\newtheorem{lemma}{Lemma}
\newtheorem*{lemma*}{Lemma}

\newtheorem*{corollary*}{Corollary}

\renewcommand{\phi}{\varphi}

\renewcommand{\vec}[1]{\mathbf{#1}}
\renewcommand{\geq}{\geqslant}
\renewcommand{\leq}{\leqslant}

\newcommand{\e}{\varepsilon}

\newcommand{\R}{\mathbb{R}}
\newcommand{\Z}{\mathbb{Z}}
\newcommand{\Q}{\mathbb{Q}}
\newcommand{\N}{\mathbb{N}}

\newcommand{\La}{\Lambda}
\newcommand{\Ga}{\Gamma}

\newcommand{\cB}{\mathcal{B}}

\newcommand{\cF}{\mathcal{F}}

\newcommand{\cH}{\mathcal{H}}

\newcommand{\cL}{\mathcal{L}}
\newcommand{\cM}{\mathcal{M}}
\newcommand{\cN}{\mathcal{N}}

\newcommand{\cP}{\mathcal{P}}
\newcommand{\cQ}{\mathcal{Q}}
\newcommand{\cR}{\mathcal{R}}
\newcommand{\cS}{\mathcal{S}}
\newcommand{\cT}{\mathcal{T}}
\newcommand{\cV}{\mathcal{V}}

\newcommand{\bB}{\mathbf{B}}
\newcommand{\bE}{\mathbf{E}}
\newcommand{\bL}{\mathbf{L}}

\newcommand{\Gl}{\textup{GL}}

\begin{document}

\maketitle

\begin{abstract}
  In this paper we describe the spectrum of values of weak uniform Diophantine exponents of lattices in arbitrary dimension.
\end{abstract}

\section{Diophantine exponents of lattices}

Let $\La$ be a full rank lattice in $\R^d$. There is a number of problems concerning the question how close to zero the product of a nonzero lattice point coordinates can be. For instance, in the case of an algebraic lattice, i.e. the lattice of a complete module in a totally real algebraic extension of $\Q$ (see \cite{borevich_shafarevich}), such a product is bounded away from zero. But if the product of a nonzero lattice point coordinates can be arbitrarily small, a natural question arises, how fast this product can tend to zero. Simplest quantitative characteristics of this phenomenon are provided by \emph{Diophantine exponents of a lattice}.

For each $\vec x=(x_1,\ldots,x_d)\in\R^d$, let us set
\[
  |\vec x|=\max_{1\leq i\leq d}|x_i|,
  \qquad
  \Pi(\vec x)=\prod_{\begin{subarray}{c}1\leq i\leq d\end{subarray}}|x_i|^{1/d}.
\]

Consider an analogue of an irrationality measure function for the lattice $\La$
\[
  \psi_\La(t)=\min_{\substack{ \vec x\in\La \\ 0<|\vec x|\leq t}}\Pi(\vec x).
\]

\begin{definition} \label{def:lattice_exponents_liminf_limsup}
  Let $\La$ be a full-rank lattice in $\R^d$. The quantity
  \[
    \omega(\La)=\sup\Big\{ \gamma\in\R \,\Big|\, \liminf_{t\to+\infty}\big(t^\gamma\psi_\La(t)\big)<+\infty \Big\}
  \]
  is called the \emph{(regular) Diophantine exponent} of $\La$.
  The quantity
  \[
    \bar\omega(\La)=\sup\Big\{ \gamma\in\R \,\Big|\, \limsup_{t\to+\infty}\big(t^\gamma\psi_\La(t)\big)<+\infty \Big\}
  \]
  is called the \emph{(weak) uniform Diophantine exponent} of $\La$.
\end{definition}

The reason why there is the term ``weak'' in the name of $\bar\omega(\La)$ is that there are at least two ways to define a uniform analogue of the regular exponent, which lead to \emph{strong} and \emph{weak} uniform exponents, respectively. The details can be found in the papers \cite{german_comm_math_2023} and \cite{german_sbornik_2026}, where it is proved that the strong variant of a uniform exponent attains only trivial values in any dimension, whereas the weak uniform exponent, in the two-dimensional case, attains all the values in the segment $[0,+\infty]$. The aim of the current paper is to describe the spectrum of all attainable values of $\bar\omega(\La)$ in an arbitrary dimension.

Minkowski's convex body theorem and the definitions of Diophantine exponents of lattices immediately imply the inequalities
\[
  \omega(\La)\geq
  \bar\omega(\La)\geq0.
\]
Until recently, what was known in dimension $d\geq3$ about the spectrum of $\omega(\La)$, i.e. about the set
\[
  \pmb\Omega_d=
  \Big\{ \omega(\La) \,\Big|\, \text{$\La$ is a full-rank lattice in $\R^d$} \Big\},
\]
was that it contains the ray $[3-d(d-1)^{-2},+\infty]$, the point $0$ and a certain finite set of points (see \cite{german_lattice_exponents_izv_2020}, \cite{german_lattice_transference}, \cite{german_monatshefte_2022}, \cite{german_UMN_2023}).
However, it was natural to expect that for every $d$ the spectrum of $\pmb\Omega_d$ coincides with the ray $[0,+\infty]$. Very recently, this problem was solved by Nikolay Moshchevitin in \cite{moshchevitin_lattice_spectrum}. Thus, it is due to Moshchevitin that we know now that the following holds.

\begin{theorem}\label{t:spectrum_of_the_regular_arbitrary_d}
  For every $d\geq2$, we have $\pmb\Omega_d=[0,+\infty]$.
\end{theorem}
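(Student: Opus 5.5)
The plan is to realize each prescribed value $\omega\in[0,+\infty]$ by an explicit lattice built inductively, in the spirit of constructions of numbers with prescribed irrationality measure. The endpoints are easy. For $\omega=0$ take an algebraic lattice: there $\Pi(\vec x)$ is bounded away from zero on $\La\setminus\{\vec 0\}$, so $\psi_\La$ is bounded below and $\omega(\La)=0$. For $\omega=+\infty$ take any lattice with a nonzero point on a coordinate hyperplane. So assume $0<\omega<\infty$.

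First I will reduce to a statement about how a lattice $\La=A\Z^d$, with $A\in\Gl_d(\R)$, is approached by lattices having points with small product. I would construct a sequence $A_k$ converging to some $A$, chosen so that each $\La_k=A_k\Z^d$ contains a point $\vec v_k$ with:
\[
  |\vec v_k|=t_k\to\infty
  \quad\text{and}\quad
  \Pi(\vec v_k)\asymp t_k^{-\omega}.
\]
To arrange this, let one coordinate of $\vec v_k$ be exactly tuned by a small perturbation of $A_{k-1}$ along a suitable one-parameter subgroup (e.g.\ a unipotent or diagonal direction). The point is that the product of coordinates of a fixed integer combination varies continuously and surjectively near zero under such perturbations. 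The perturbation size $\|A_k-A_{k-1}\|$ should be roughly $t_k^{-\omega d-1}$, summable, and small enough that $\vec v_k$ survives into the limit. Concretely, $\Pi(A\vec m_k)$ should stay comparable to $\Pi(A_k\vec m_k)$, using $\|A-A_k\|\,|\vec m_k|\ll$ the smallest coordinate of $\vec v_k$. This gives the lower bound $\omega(\La)\geq\omega$.

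The upper bound $\omega(\La)\leq\omega$ is the main obstacle. We must ensure that no other lattice points have abnormally small products at any scale. My first approach is to keep control through a Minkowski-type/compactness argument. At each stage $A_{k-1}$ is a small perturbation of an algebraic lattice, or more generally of a lattice whose product is bounded below on all points of norm at most some $T_{k-1}$. Choose $t_k$ so rapidly growing (e.g.\ $t_k\geq \exp(t_{k-1})$) that every later perturbation is too tiny to affect points of norm $\leq T_{k-1}$. Then only points of norm in $[T_{k-1},T_k]$ matter at stage $k$.

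In that range I would argue that the perturbed lattice behaves like the reference lattice, except along the one-dimensional sublattice $\Z\vec m_k$. Points not proportional to $\vec m_k$ form, modulo $\vec v_k$, a lattice whose products are controlled by the algebraic lattice; points $j\vec m_k$ have $\Pi$ growing linearly in $j$. Getting this uniformly is the delicate part. If the direct estimate fails, I would instead invoke a transference/parametric-geometry-of-numbers description: encode $\psi_\La$ through the successive minima of $\mathrm{diag}(e^{s_1},\dots,e^{s_d})\La$ and build the lattice from a prescribed "template" trajectory with exponent $\omega$.

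Either way, the finite intermediate values should follow once the scales $t_k$ separate fast enough that the contributions of different stages do not interact.
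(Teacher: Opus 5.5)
\textbf{There is a genuine gap: the upper bound $\omega(\La)\leq\omega$, which is the whole difficulty, is not established.} Your endpoint cases are fine. The lower bound $\omega(\La)\geq\omega$, via a Jarn\'{\i}k-type scheme (tune one coordinate of $A_{k-1}\vec m_k$, and let $t_k$ grow so fast that later perturbations cannot undo it), is standard and can be made to work.

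The mechanism you sketch for the upper bound fails, for the following reasons.

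\begin{enumerate}
\item A perturbation $\Delta$ of an algebraic lattice $A_0\Z^d$ can only be guaranteed to keep $\Pi$ bounded below on points of norm $T\ll\|\Delta\|^{-1/d}$. By Dirichlet's unit theorem, an algebraic lattice has points of norm $\asymp T$ with one coordinate $\asymp T^{-(d-1)}$, and the perturbation moves that coordinate by about $\|\Delta\|T$.

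\item $\|A_{k-1}-A_0\|$ is of the order of the first perturbation, which is fixed, while the window $[T_{k-1},T_k]$ escapes to infinity. So for $k\geq2$ the lattice $A_{k-1}\Z^d$ retains no trace of the algebraic structure in the range where $\vec m_k$ lives. Your alternative hypothesis (products bounded below up to norm $T_{k-1}$) says nothing above $T_{k-1}$ either.

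\item Every later perturbation is, by design, too small to affect that window. Hence any points with $\Pi(\vec x)<|\vec x|^{-\omega-\e}$, or even $\Pi(\vec x)=0$, that $A_k\Z^d$ has there survive into $\La$, and nothing in your construction excludes them.

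\item The heuristic that points not proportional to $\vec m_k$ are ``controlled by the algebraic lattice modulo $\vec v_k$'' is not a meaningful reduction, since $\Pi$ is not compatible with adding multiples of $\vec v_k$.

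\item The fallback is not an available tool. $\psi_\La$ is governed by the first minimum of $g\La$ over the whole $(d-1)$-parameter diagonal group. There is no known analogue of Roy's theorem realizing prescribed trajectories for that action on general lattices.
\end{enumerate}

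\textbf{What is missing} is a mechanism that bounds $\Pi$ from below at all scales simultaneously, off an explicitly controlled set of lattice points. The paper obtains it as follows.

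\begin{enumerate}
\item All small products are confined to a fixed rank $2$ sublattice $\Ga_\cL=L\Ga$ in a plane $\cL$ of general position. There $|L\vec x|=|\vec x|$ and $\Pi(L\vec x)=\Pi_\bL(\vec x)$ (Lemma~\ref{l:lifting_up_the_functionals}).

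\item The planar lattice $\Ga=D_\tau\Ga_{\theta,\theta}$ has exponent $(\beta-1)/2$ from continued fractions. The parameter $\tau$ is chosen via Khintchine's theorem so that $\bL$ is logarithmically badly approximable (Lemma~\ref{l:providing_log_bad}). Then $\omega_\bL(\Ga)=(\beta-n-1)/(n+2)$ for $\beta\geq n+1$ (Lemma~\ref{l:from_omega_to_omega_L}).

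\item The complementing vectors $\vec e_1,\ldots,\vec e_n$ are chosen at random. A Borel--Cantelli estimate over $\bE$, with $\Ga_\cL$ held fixed (Lemma~\ref{l:main_metric_argument}), gives $\Pi(\vec x)\gg\log^{-1-\e}(1+|\vec x|)$ for every $\vec x\in\La\setminus\Ga_\cL$.

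\item Lemma~\ref{l:2dim_provides_all} then yields $\omega(\La)=\omega_\bL(\Ga)$.
\end{enumerate}

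To rescue your scheme you would need a metric statement of the same kind. You would randomize the free part of each perturbation and prove that, with positive conditional probability at every stage, the lattice has no points with $\Pi(\vec x)<|\vec x|^{-\omega-\e}$ in the current window other than multiples of $\vec m_k$. This must include the points near $\Z\vec m_k$, whose behaviour is correlated with the tuning. That argument is the real content of the theorem, and your proposal does not supply it.
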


In this paper we prove a similar statement for $\bar\omega(\La)$. Set
\[
  \pmb{\bar\Omega}_d=
  \Big\{ \bar\omega(\La) \,\Big|\, \text{$\La$ is a full-rank lattice in $\R^d$} \Big\}.
\]
The following statement is the main result of the paper.

\begin{theorem}\label{t:spectrum_of_the_weak_arbitrary_d}
  For every $d\geq2$, we have $\pmb{\bar\Omega}_d=[0,+\infty]$.
\end{theorem}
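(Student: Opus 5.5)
Since $\bar\omega(\La)\geq0$ holds for every lattice by Minkowski's theorem, it suffices to show that every $\gamma\in[0,+\infty]$ is attained. The endpoint $0$ is attained by any algebraic lattice: there $\Pi(\vec x)$ is bounded away from zero on $\La\setminus\{\vec 0\}$, so $\bar\omega(\La)\leq\omega(\La)=0$. For positive values I would build $\La$ explicitly, by an inductive limiting procedure in the spirit of Moshchevitin's construction behind Theorem~\ref{t:spectrum_of_the_regular_arbitrary_d}. The aim is to control $\psi_\La(t)$ at \emph{all} large $t$, not only along a sequence. Fix $\gamma\in(0,+\infty)$. I would construct a sequence of nonzero lattice vectors $\vec v_1,\vec v_2,\ldots\in\La$ with $|\vec v_k|=t_k\to\infty$ satisfying three conditions:
\[
  \Pi(\vec v_k)\asymp t_{k+1}^{-\gamma},\qquad t_{k+1}\geq t_k^{A}
\]
for a large parameter $A$, and such that no other lattice point gives a substantially better product.

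The upper bound $\bar\omega(\La)\geq\gamma$ then follows directly. For $t\in[t_k,t_{k+1})$ we have $\psi_\La(t)\leq\Pi(\vec v_k)\ll t_{k+1}^{-\gamma}\leq t^{-\gamma}$, so $\limsup_{t\to+\infty} t^{\gamma-\e}\psi_\La(t)<\infty$ for every $\e>0$.

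For the reverse inequality $\bar\omega(\La)\leq\gamma$, I would examine $t$ just below $t_{k+1}$. There $\psi_\La(t)$ is governed by $\vec v_k$, so it is about $t_{k+1}^{-\gamma}\approx t^{-\gamma}$. Hence it is enough to show that every $\vec x\in\La$ with $|\vec x|<t_{k+1}$, other than multiples of the $\vec v_j$, has $\Pi(\vec x)\gg t^{-\gamma-\e}$.

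To realize this I would take $\La=A\Z^d$ and run a Cantor-type nested argument. At step $k$ we have an open set $U_k\subset\Gl_d(\R)$ of matrices satisfying all requirements for vectors of norm below $t_k$. We then shrink it to $U_{k+1}$ so that some primitive integer vector $\vec z_k$ has $A\vec z_k$ lying very close to a coordinate hyperplane. Concretely, one coordinate is of size about $t_{k+1}^{-d\gamma}t_k^{-(d-1)}$ and the others are about $t_k$, which gives $\Pi\asymp t_{k+1}^{-\gamma}$. Such a perturbation exists because the set of matrices with $A\vec z$ on a given hyperplane has codimension one, and we can choose which coordinate hyperplane to approach. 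Openness ensures the earlier finitely many constraints survive. The matrix $A$ is then the intersection of the closures of the $U_k$.

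The degenerate value $\gamma=+\infty$ is handled the same way, replacing $t_{k+1}^{-\gamma}$ by $e^{-t_{k+1}}$; only the upper bound direction is needed there.

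I expect the main obstacle to be the lower bound: ruling out all other lattice points with $|\vec x|<t_{k+1}$ having unexpectedly small $\Pi(\vec x)$. In dimension $d\geq3$ the products $\Pi$ are not controlled by a continued-fraction structure as they are in the two-dimensional case. Small products may come from combinations $\vec x=m\vec v_k+\vec w$, where $\vec w$ is short. I would handle this with a metric (Borel–Cantelli type) argument inside each step. We choose $A$ in $U_{k+1}$ generically, so that the measure of matrices with some $\vec x$ ($|\vec x|\leq T$) satisfying $\Pi(\vec x)<T^{-\e}$ is small. The counting is the standard one, with roughly $T^{d}(\log T)^{d-1}$ multiplicative boxes. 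We also use $t_{k+1}\geq t_k^A$ with $A$ large, so that the special vector $\vec v_k$, whose coordinate near the hyperplane is tiny, affects only combinations with $\Pi$ controlled by the other coordinates, which are about $t_k$. Carrying out this genericity estimate uniformly while preserving the prescribed near-degenerate vectors is the delicate step.
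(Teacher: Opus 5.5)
Your treatment of the endpoints and of the inequality $\bar\omega(\La)\ge\gamma$ is fine. For $\gamma=+\infty$ even $\Z^d$ works, since $\psi_{\Z^d}(t)=0$ for $t\ge1$.

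\textbf{The gap.} The reverse inequality $\bar\omega(\La)\le\gamma$ is the whole content of the theorem, and the metric step you propose for it fails as formulated.

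\emph{Measure comparison.} At step $k+1$ you need a good matrix inside $U_{k+1}\subset\{A:\ |(A\vec z_k)_i|\lesssim\delta_k\}$, where $\delta_k=t_{k+1}^{-d\gamma}t_k^{-(d-1)}$. Within a fixed compact set of matrices this slab has Lebesgue measure $\ll\delta_k/t_k\le t_k^{-d(A\gamma+1)}$.
\begin{itemize}
\item If ``small'' refers to Lebesgue measure: the set of matrices having some $\vec x$ with $|\vec x|\le T$ and $\Pi(\vec x)<T^{-\e}$ has measure of order $T^{-d\e}$ up to logarithms. For $T\approx t_k$ this is enormously larger than $\mu(U_{k+1})$, so nothing prevents it from covering $U_{k+1}$ entirely.
\item If ``small'' refers to the normalized measure on $U_{k+1}$: the count you quote, with density $|\vec z|^{-d}$, does not apply. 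For $\vec z=m\vec z_k+\vec w$ one has $(A\vec z)_i=m(A\vec z_k)_i+(A\vec w)_i$, so on $U_{k+1}$ this coordinate spreads over a range of order $|\vec w|$ rather than $|\vec z|$. After several constraints on the same row, it is essentially determined for all $\vec z$ near the span of the earlier $\vec z_j$.
\end{itemize}

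\emph{The target threshold.} The requirement $\Pi(\vec x)>T^{-\e}$ for non-special $\vec x$ cannot hold without restricting the choice of hyperplanes. Suppose $\vec v_k$ and $\vec v_{k+1}$ both approach $\{x_d=0\}$. Choose $b$ with $|b|\approx t_{k+1}/t_k$ so that the first coordinate of $\vec v_{k+1}+b\vec v_k$ is $\ll t_k$. Then this vector has norm $\ll t_{k+1}$ and $\Pi(\vec v_{k+1}+b\vec v_k)^d\ll(t_{k+1}/t_k)^{d-1}t_{k+1}^{-d\gamma}$. This is below $t_{k+1}^{-d\e}$ as soon as $\gamma>(d-1)/d+\e$.

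\emph{What a workable version would need.} Your codimension-one remark gives at most nonemptiness, not the following quantitative statements, and none of them is supplied.
\begin{itemize}
\item Use the weaker threshold $t_{k+1}^{-\gamma-\e}$, whose bad set has Lebesgue measure $\ll t_{k+1}^{-d(\gamma+\e)}\log^d t_{k+1}$.
\item Prove a lower bound $\gg_{U_k,\vec z_k}\delta_k$ for the measure of $U_k\cap\{|(A\vec z_k)_i|\asymp\delta_k\}$, and only then fix $t_{k+1}$ large enough that the bad set is smaller.
\item Show that a suitable $\vec z_{k+1}$ of the prescribed size $t_{k+1}$ exists for the thin set $U_{k+1}$.
\end{itemize}

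\textbf{How the paper avoids this.} The paper separates the deterministic part from the random part instead of using a nested construction.
\begin{itemize}
\item All vectors with small products lie in one fixed rank-$2$ sublattice $\Ga_\cL=L\Ga$ in a plane of general position. There $|L\vec x|=|\vec x|$ and $\Pi(L\vec x)=\Pi_\bL(\vec x)$ (Lemma~\ref{l:lifting_up_the_functionals}).
\item Every integer combination of special vectors is therefore governed by the two-dimensional continued-fraction lattice $\Ga_{\theta,\eta}$. Its hyperbolic minima satisfy $|\vec x_{k+1}|\asymp|\vec x_k|^\beta$ and $\Pi(\vec x_k)\asymp|\vec x_k|^{(1-\beta^2)/2}$, so $\bar\omega_\bL(\Ga)=(\beta-(n+1)\beta^{-1})/(n+2)$ after a twist $D_\tau$ (Lemmas~\ref{l:from_bar_omega_to_bar_omega_L} and~\ref{l:providing_log_bad}).
\item The remaining $d-2$ basis vectors are drawn from a fixed cube. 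A single Borel--Cantelli argument with respect to full Lebesgue measure (Lemma~\ref{l:main_metric_argument}) then gives $\Pi(\vec x)\gg\log^{-1-\e}(1+|\vec x|)$ off $\Ga_\cL$. The spread phenomenon above appears there only as the density bound $\ll|\vec z_E|^{-1}$, absorbed by the count $|\cR(\vec z_E,C)|\ll C|\vec z_E|$.
\item Lemma~\ref{l:2dim_provides_all} transfers this to $\bar\omega(\La)=(\beta-(n+1)\beta^{-1})/(n+2)$, which sweeps $[0,+\infty]$ as $\beta$ runs over $[\sqrt{n+1},+\infty]$.
\end{itemize}
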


We note that in the proof of Theorem \ref{t:spectrum_of_the_weak_arbitrary_d} we use essentially the ideas contained in Moshchevitin's paper \cite{moshchevitin_lattice_spectrum}.

The paper is organised as follows. In Section \ref{sec:hyperbolic_minima} we define hyperbolic minima -- a very useful tool for multiplicative problems. In Section \ref{sec:2dim_and_additional_forms} we study two-dimensional multiplicative problems that help us to work in higher dimensions. Sections \ref{sec:rank_2_sublattice_construction} and \ref{sec:complement_to_La} are devoted to constructing a lattice in $\R^d$ of rank $2$ and complementing it to a lattice of rank $d$. In Section \ref{sec:metric_lemma} we prove the key statement of metric nature, which enables the appropriate complementing of the lattice of rank $2$ to a lattice of rank $d$. Finally, in Section \ref{sec:proofs} we prove Theorem \ref{t:spectrum_of_the_weak_arbitrary_d} and show how Moshchevitin's Theorem \ref{t:spectrum_of_the_regular_arbitrary_d} can be proved with the help of the constructions developed.

\section{Hyperbolic minima of lattices}\label{sec:hyperbolic_minima}

In many classical problems in the theory of Diophantine approximation, an important role is played by \emph{best approximations}, defined for each setting in their own way. In the case of problems concerning Diophantine exponents of lattices, we are dealing with the essentially multiplicative nature of these problems. For this reason, the role of the best approximations is played by the so-called \emph{hyperbolic minima}.

\begin{definition}\label{def:hyperbolic_minimum}
   Let$\La$ be a full-rank lattice in $\R^d$. We call a nonzero point $\vec x\in\La$ a \emph{hyperbolic minimum} of $\La$ if there are no nonzero points $\vec y\in\La$ such that
  \[
    |\vec y|\leq|\vec x|
    \qquad\text{ and }\qquad
    \Pi(\vec y)<\Pi(\vec x).
  \] 
\end{definition}

Note that the choice of the sup-norm $|\,\cdot\,|$ is fairly conventional. Choosing any other norm does not affect the values of the Diophantine exponents, as all the norms in $\R^d$ are equivalent and a homothetic transformation of a lattice preserves its exponents. And in some problems, it is very convenient to use a norm distinct from the sup-norm. For this reason we give a modification of Definition \ref{def:hyperbolic_minimum}.

\begin{definition}\label{def:hyperbolic_minimum_arbitrary_norm}
  Let$\La$ be a full-rank lattice in $\R^d$ and let $|\,\cdot\,|_\ast$ be an arbitrary norm in $\R^d$. We call a nonzero point $\vec x\in\La$ a \emph{hyperbolic minimum} of $\La$ \emph{w.r.t.} $|\,\cdot\,|_\ast$ if there are no nonzero points $\vec y\in\La$ such that
  \[
    |\vec y|_\ast\leq|\vec x|_\ast
    \qquad\text{ and }\qquad
    \Pi(\vec y)<\Pi(\vec x).
  \] 
\end{definition}

Let us note for the future that, since all the norms are equivalent, i.e. for each norm $|\,\cdot\,|_\ast$ there are positive constants $c_1,c_2$ such that for every $\vec x\in\R^d$ we have
\[
  c_1|\vec x|\leq|\vec x|_\ast\leq c_2|\vec x|,
\]
whenever we use the symbols $\gg$, $\ll$, $\asymp$, we assume that, generally, the implied constants depend on the norm $|\,\cdot\,|_\ast$\,, even though we do not point this out explicitly.

Let us set
\[
  \cH(\vec x)=\Big\{\vec y\in\R^d \,\Big|\, |\vec y|_\ast\leq|\vec x|_\ast,\ \Pi(\vec y)<\Pi(\vec x) \Big\}.
\]
It is obvious from Definition \ref{def:hyperbolic_minimum_arbitrary_norm} that a nonzero point $\vec x$ of $\La$ is a hyperbolic minimum of $\La$ w.r.t. $|\,\cdot\,|_\ast$ if and only if $\cH(\vec x)\cap\La=\{\vec 0\}$.

If $\vec x$ is a hyperbolic minimum of $\La$ (w.r.t. $|\,\cdot\,|_\ast$), then, of course, $-\vec x$ is also a hyperbolic minimum of $\La$. It may turn out that there are other hyperbolic minima with the same values of the functionals $|\cdot|_\ast$ and $\Pi(\,\cdot\,)$. Let us choose a representative out of each such set and let us order them by ascending norm $|\cdot|_\ast$. We get a sequence $\vec x_1,\vec x_2,\vec x_3,\ldots$ of hyperbolic minima (see Fig. \ref{fig:successive_hyperbolic_minima}), for which $|\vec x_k|_\ast<|\vec x_{k+1}|_\ast$, $\Pi(\vec x_{k+1})\leq\Pi(\vec x_k)$ and $\bar\cH(\vec x_k)\cap\La=\{\vec 0\}$, where
\[
  \bar\cH(\vec x_k)=\Big\{\vec y\in\R^d \,\Big|\, |\vec y|_\ast<|\vec x_{k+1}|_\ast,\ \Pi(\vec y)<\Pi(\vec x_k) \Big\}.
\]

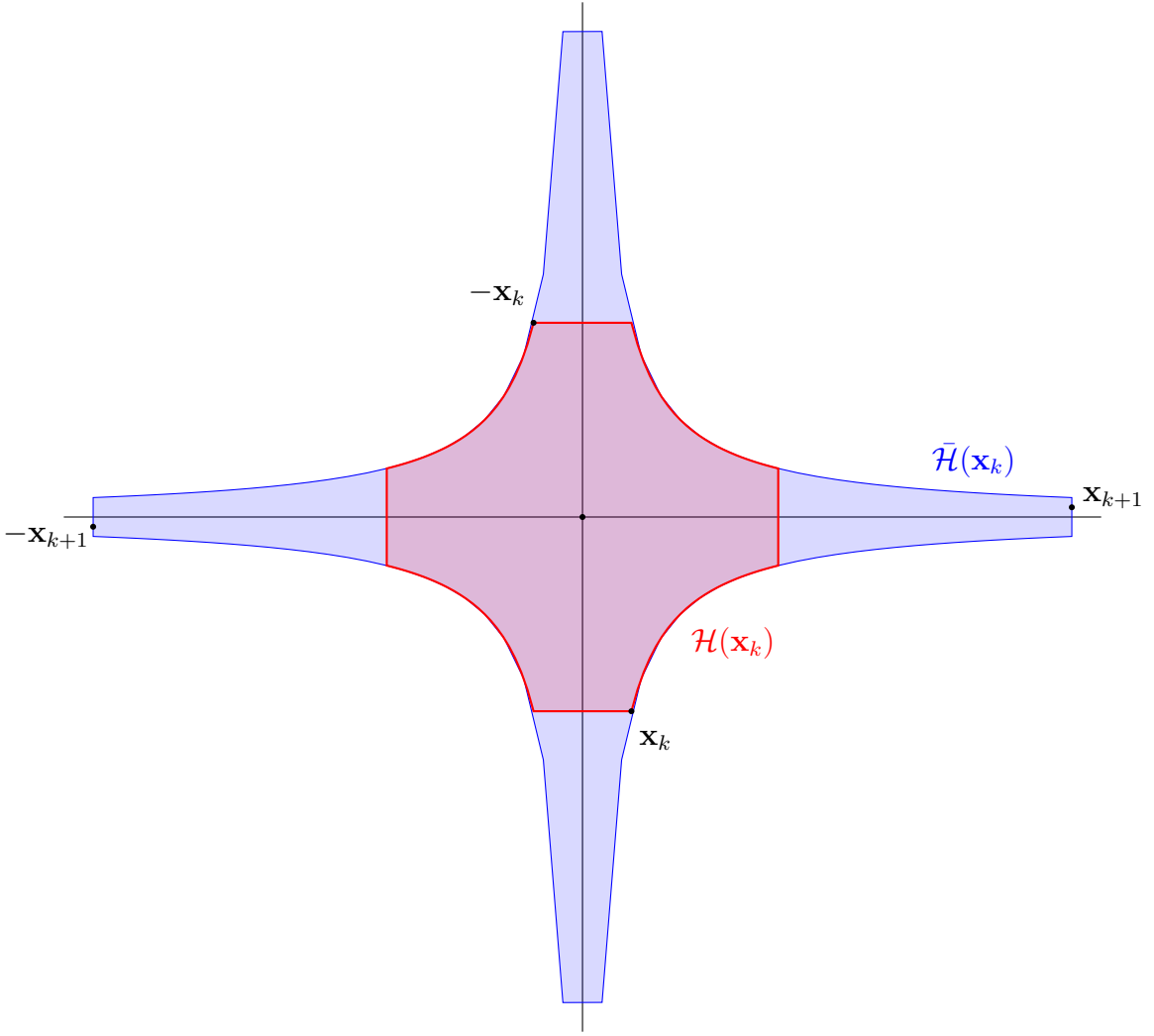
\begin{figure}[h]
\centering
\begin{tikzpicture}[domain=-5.1:5.1,scale=1.3]

  \draw (-5.3,0) -- (5.3,0); 
  \draw (0,-5.3) -- (0,5.3); 

  \fill[fill=blue,opacity=0.15]
      plot [domain=0.2:5] (\x,{1/(\x)}) --
      plot [domain=5:0.2] (\x, {-1/(\x)}) --
      plot [domain=-0.2:-5] (\x,{1/(\x)}) --
      plot [domain=-5:-0.2] (\x, {-1/(\x)}) --
      cycle;

  \fill[fill=red,opacity=0.15]
      plot [domain=0.5:2] (\x,{1/(\x)}) --
      plot [domain=2:0.5] (\x, {-1/(\x)}) --
      plot [domain=-0.5:-2] (\x,{1/(\x)}) --
      plot [domain=-2:-0.5] (\x, {-1/(\x)}) --
      cycle;

  \draw[color=blue]
      plot [domain=0.2:5] (\x,{1/(\x)}) --
      plot [domain=5:0.2] (\x, {-1/(\x)}) --
      plot [domain=-0.2:-5] (\x,{1/(\x)}) --
      plot [domain=-5:-0.2] (\x, {-1/(\x)}) --
      cycle;

  \draw[color=red, thick]
      plot [domain=0.5:2] (\x,{1/(\x)}) --
      plot [domain=2:0.5] (\x, {-1/(\x)}) --
      plot [domain=-0.5:-2] (\x,{1/(\x)}) --
      plot [domain=-2:-0.5] (\x, {-1/(\x)}) --
      cycle;

  \node[fill=black,circle,inner sep=0.8pt] at (0,0) {};
  \node[fill=black,circle,inner sep=0.8pt] at (-0.5,2) {};
  \node[fill=black,circle,inner sep=0.8pt] at (0.5,-2) {};
  \node[fill=black,circle,inner sep=0.8pt] at (5,0.1) {};
  \node[fill=black,circle,inner sep=0.8pt] at (-5,-0.1) {};
  
  \draw(0.47,-2.3) node[right]{$\vec x_k$};
  \draw(-0.47,2.3) node[left]{$-\vec x_k$};
  \draw(5,0.2) node[right]{$\vec x_{k+1}$};
  \draw(-4.93,-0.2) node[left]{$-\vec x_{k+1}$};

  \draw(4,0.3) node[above]{${\color[rgb]{0,0,1}\bar\cH(\vec x_k)}$};
  \draw(1,-1.3) node[right]{${\color[rgb]{1,0,0}\cH(\vec x_k)}$};

\end{tikzpicture}
\caption{Successive hyperbolic minima}\label{fig:successive_hyperbolic_minima}
\end{figure}

If $\Pi(\,\cdot\,)$ attains arbitrarily small values at nonzero points of $\La$ but does not vanish at any such point, the set of hyperbolic minima is infinite. In this case $\omega(\La)$, $\bar\omega(\La)$ satisfy the relations
\begin{equation}\label{eq:omega_in_terms_of_hyperbolic_minima}
  \omega(\La)=\sup\Big\{ \gamma\in\R \,\Big|\, \forall\,K\in\N\,\ \exists\,k\geq K:\,\Pi(\vec x_k)\leq|\vec x_k|_\ast^{-\gamma} \Big\},\ \
\end{equation} 
\begin{equation}\label{eq:bar_omega_in_terms_of_hyperbolic_minima}
  \bar\omega(\La)=\sup\Big\{ \gamma\in\R \,\Big|\, \exists\,K\in\N:\,\forall k\geq K\ \Pi(\vec x_k)\leq|\vec x_{k+1}|_\ast^{-\gamma} \Big\}.
\end{equation}

\section{Two-dimensional case and additional linear forms}\label{sec:2dim_and_additional_forms}

The two-dimensional case will play a key role in the proof of Theorem \ref{t:spectrum_of_the_weak_arbitrary_d}. More detailed, for $d\geq3$, we consider a two-dimensional subspace $\cL$ of general position, construct a rank $2$ lattice in it whose projection onto the plane of the first two coordinates has a given value of the regular or the uniform exponent, after which we complement this lattice to a full-rank lattice in $\R^d$. Respectively, since we must control the product of all the $d$ coordinates of points in $\cL$, we must learn how to work with the following modification of regular and uniform exponents in the two-dimensional case.

Let $\ell_1,\ldots,\ell_n$ be arbitrary nonzero linear forms in two variables, $n\geq1$. For brevity, we will denote this $n$-tuple by $\bL$, i.e. $\bL=(\ell_1,\ldots,\ell_n)$.

Let us set for each $\vec x=(x_1,x_2)\in\R^2$
\[
  \Pi_\bL(\vec x)=\bigg|x_1x_2\prod_{1\leq i\leq n}\ell_i(\vec x)\bigg|^{1/(n+2)}.
\]
Let us set for each rank $2$ lattice $\Ga$ in $\R^2$
\[
  \psi_{\Ga,\bL}(t)=\min_{\substack{ \vec x\in\Ga \\ 0<|\vec x|\leq t}}\Pi_\bL(\vec x)
\]
and define the exponents
\[
  \omega_\bL(\Ga)=\sup\Big\{ \gamma\in\R \,\Big|\, \liminf_{t\to+\infty}\big(t^\gamma\psi_{\Ga,\bL}(t)\big)<+\infty \Big\},
\]
\begin{equation}\label{eq:bar_omega_L_definition}
  \bar\omega_\bL(\Ga)=\sup\Big\{ \gamma\in\R \,\Big|\, \limsup_{t\to+\infty}\big(t^\gamma\psi_{\Ga,\bL}(t)\big)<+\infty \Big\}.
\end{equation}
Note that $\omega_\bL(\Ga)$, $\bar\omega_\bL(\Ga)$ are not necessarily nonnegative. The only inequalities that the definition and Minkowski's theorem provide are
\begin{equation}\label{eq:trivial_inequalities_for_omega_L}
  \omega_\bL(\Ga)\geq\bar\omega_\bL(\Ga)\geq-\frac n{n+2}\,.
\end{equation}

We say that $\bL$ is \emph{logarithmically badly approximable} w.r.t. $\Ga$ if there are constants $\e>0$ and $c>0$ such that for each $i=1,\ldots,n$ and each nonzero $\vec x\in\Ga$ we have
\[
  |\ell_i(\vec x)|>\frac c{|\vec x|\log^{1+\e}(1+|\vec x|)}.
\]

\begin{lemma}\label{l:from_omega_to_omega_L}
   Let $\Ga$ be a rank $2$ lattice in $\R^2$. Assume that all the coefficients of the linear forms in $\bL$ are nonzero and that no two forms in $\bL$ are proportional. Let $\bL$ be logarithmically badly approximable w.r.t. $\Ga$. Let
  \[
    \omega(\Ga)=\delta.
  \]
  Then
  \[
    \omega_\bL(\Ga)=
    \max\bigg(0,\,\dfrac{2\delta-n}{n+2}\bigg).
  \]
\end{lemma}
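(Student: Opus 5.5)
The plan is to compare $\Pi_\bL(\vec x)^{n+2}=|x_1x_2|\prod_i|\ell_i(\vec x)|$ with $\Pi(\vec x)^2=|x_1x_2|$ for nonzero $\vec x\in\Ga$, splitting lattice points by how close they lie to the coordinate axes. The key observation is this. Since the forms $\ell_i$ have all coefficients nonzero and are pairwise non-proportional, their kernels are lines distinct from each other and from the coordinate axes. Hence for every $\vec x\in\R^2$ at most one of the $n+2$ forms $x_1,x_2,\ell_1,\ldots,\ell_n$ can be much smaller than $|\vec x|$; all the others are $\asymp|\vec x|$. Concretely, there is a constant $\kappa>0$ such that for each $\vec x$ at most one of these forms has absolute value $<\kappa|\vec x|$.

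For the lower bound $\omega_\bL(\Ga)\geq\max(0,(2\delta-n)/(n+2))$ I would proceed as follows. To get $\geq0$, it suffices to exhibit a sequence of nonzero $\vec x\in\Ga$ with $\Pi_\bL(\vec x)$ bounded, and hence $t^{0}\psi_{\Ga,\bL}(t)$ bounded along $t=|\vec x|$. Minkowski's theorem applied to a thin parallelogram $\{|\ell_1(\vec y)|\leq C/T,\ |\vec y|\leq T\}$ gives nonzero $\vec x$ with $|\ell_1(\vec x)|\ll|\vec x|^{-1}$. The other forms are then $\asymp|\vec x|$, so $\Pi_\bL(\vec x)^{n+2}\ll|\vec x|^{n+1}|\vec x|^{-1}=|\vec x|^{n}$. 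This is not bounded, so the construction has to be adjusted. The honest bound is $\Pi_\bL^{n+2}\geq$ (smallest form)$\cdot|\vec x|^{n+1}$, and the smallest form is $\gg|\vec x|^{-1}\log^{-1-\e}$ (badly approximable for the $\ell_i$, and nonzero for $x_1,x_2$ in the relevant regime). This yields $\Pi_\bL\gg|\vec x|^{n/(n+2)-o(1)}$, which suggests the lemma should be read with a normalisation making the trivial lower bound $-n/(n+2)$ consistent with \eqref{eq:trivial_inequalities_for_omega_L}. I would therefore recheck the sign convention: with $\psi$ as a minimum and $\gamma$ weighting $t^\gamma$, having $\Pi_\bL\asymp|\vec x|^{n/(n+2)}$ means an exponent of $-n/(n+2)$ is attained, and points near the axes give $\Pi_\bL^{n+2}\asymp|x_1x_2||\vec x|^n$. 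Taking $\vec x$ with $|x_1x_2|=\Pi(\vec x)^2\leq|\vec x|^{-2\gamma}$ for $\gamma<\delta$ gives $\Pi_\bL\leq|\vec x|^{(n-2\gamma)/(n+2)}$, i.e. exponent $(2\gamma-n)/(n+2)$. The term $\max(0,\cdot)$ must then come from comparing with points near the kernels of the $\ell_i$, which give exponent $-n/(n+2)+o(1)$ and never beat $0$. I would verify carefully which normalisation makes $0$ the floor. My suspicion is that the paper's claim hides the fact that points near the axes always give $\delta\ge0$, since $\omega(\Ga)\geq0$, so that one should carefully recompute the $\max$ rather than trust my arithmetic.

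The upper bound follows from the same splitting. A point $\vec x$ close to a kernel of some $\ell_i$ has $\Pi_\bL^{n+2}\gg|\vec x|^{n}\log^{-1-\e}|\vec x|$, using logarithmic bad approximability. A point close to an axis has $\Pi_\bL^{n+2}\asymp\Pi(\vec x)^2|\vec x|^n\geq|\vec x|^{n-2\delta-o(1)}$ for large $|\vec x|$, by the definition of $\delta=\omega(\Ga)$. A point close to none of these has $\Pi_\bL\asymp|\vec x|$. Taking the minimum over the three regimes bounds $\psi_{\Ga,\bL}(t)$ from below and gives the upper bound on $\omega_\bL$. The main obstacle I expect is the bookkeeping that matches the three regimes against the stated formula, including the role of the $\log^{1+\e}$ factor, which is exactly what makes the kernel regime contribute only the subpolynomial loss and not a change of exponent.
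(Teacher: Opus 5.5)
Your two substantive ingredients match the paper's. The upper bound comes from splitting $\Gamma\setminus\{\vec 0\}$ into points near the axes, points near the kernels of the $\ell_i$, and the rest (the paper's sets $\mathcal S_0$, $\mathcal S_j$). The lower bound $(2\delta-n)/(n+2)$ comes from lattice points with $\Pi(\vec x)\le|\vec x|^{-\gamma}$, $\gamma<\delta$; the paper takes these among hyperbolic minima, and evaluating $\psi_{\Gamma,\mathbf L}$ at $t=|\vec x|$ as you do is fine.

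However, the proposal never establishes the $\max(0,\cdot)$, and the explanation you offer for it is wrong. The missing observation is elementary. $\psi_{\Gamma,\mathbf L}(t)$ is a minimum over the increasing family $\{\vec x\in\Gamma: 0<|\vec x|\le t\}$, so it is non-increasing, and $\psi_{\Gamma,\mathbf L}(t)\le\Pi_{\mathbf L}(\vec x_0)$ for any fixed nonzero $\vec x_0\in\Gamma$ and all $t\ge|\vec x_0|$. Hence $\gamma=0$ always lies in the set defining $\omega_{\mathbf L}(\Gamma)$, so $\omega_{\mathbf L}(\Gamma)\ge 0$ with no construction at all. Your Minkowski attempt is unnecessary, and there is no normalisation issue: with a running minimum both exponents are automatically nonnegative, which is stronger than the bound $-n/(n+2)$ mentioned before the lemma.

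The $0$ is not a consequence of $\omega(\Gamma)\ge 0$, nor of kernel points ``giving exponent $-n/(n+2)$''. It appears exactly when $\delta<n/2$. Choose $\gamma\in(\delta,n/2)$; your three regimes then give
$\Pi_{\mathbf L}(\vec x)\gg|\vec x|^{(n-2\gamma)/(n+2)}$, $\Pi_{\mathbf L}(\vec x)\gg\bigl(|\vec x|^{n}\log^{-1-\varepsilon}(1+|\vec x|)\bigr)^{1/(n+2)}$ and $\Pi_{\mathbf L}(\vec x)\asymp|\vec x|$, all tending to infinity. Also $\Pi_{\mathbf L}$ does not vanish on $\Gamma\setminus\{\vec 0\}$: the $\ell_i$ by logarithmic bad approximability, and $x_1x_2$ because otherwise $\delta=+\infty$. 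So $\Pi_{\mathbf L}$ is bounded away from zero, $\psi_{\Gamma,\mathbf L}$ is eventually a positive constant, and $\omega_{\mathbf L}(\Gamma)\le 0$. This is precisely the paper's argument.

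For $\delta\ge n/2$ the bookkeeping you left open is short. For $\gamma>\delta$ and near-axis $\vec x$ with $|\vec x|\le t$, one has $\Pi_{\mathbf L}(\vec x)\gg|\vec x|^{-(2\gamma-n)/(n+2)}\ge t^{-(2\gamma-n)/(n+2)}$, because the exponent $-(2\gamma-n)/(n+2)$ is nonpositive. The other regimes stay bounded below.

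Separately, the middle step of your second paragraph is false and should be discarded. Logarithmic bad approximability controls only $\ell_1,\dots,\ell_n$, not the coordinates. So the smallest of the $n+2$ forms is not $\gg|\vec x|^{-1}\log^{-1-\varepsilon}(1+|\vec x|)$ in general: near an axis, $\min(|x_1|,|x_2|)$ can be as small as $|\vec x|^{-1-2\delta+o(1)}$ along a subsequence. The resulting claim $\Pi_{\mathbf L}(\vec x)\gg|\vec x|^{n/(n+2)-o(1)}$ for all $\vec x$ would force $\omega_{\mathbf L}(\Gamma)=0$, contradicting the lemma whenever $\delta>n/2$. Your last paragraph handles the axis regime correctly via $\Pi_{\mathbf L}(\vec x)^{n+2}\asymp\Pi(\vec x)^2|\vec x|^n$, and that is the estimate to keep.
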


\begin{proof}
  Consider the sets
  \begin{equation}\label{eq:S_0}
    \cS_0=\Big\{ \vec x=(x_1,x_2)\in\R^2 \,\Big|\, \min\big(|x_1|,|x_2|\big)<1 \Big\},
  \end{equation}
  \begin{equation}\label{eq:S_j}
    \cS_j=\Big\{ \vec x\in\R^2 \,\Big|\, |\ell_j(\vec x)|<1 \Big\},\qquad\ \ j=1,\ldots,n.
  \end{equation}
  It follows from the conditions on the linear forms that for $\vec x\in\cS_0\backslash\{\vec 0\}$
  \[
    \Pi_\bL(\vec x)\asymp_\bL
    \Big(\Pi(\vec x)^2|\vec x|^n\Big)^{\frac1{n+2}}>
    |\vec x|^{-\frac{2\delta-n}{n+2}+o(1)}
    \ \ \text{ as }\ \ |\vec x|\to\infty,
  \]
  and for $\vec x\in\cS_j\backslash\{\vec 0\}$ with any $j=1,\ldots,n$
  \[
    \Pi_\bL(\vec x)\gg_\bL
    \bigg(\frac{|\vec x|^{n+1}}{|\vec x|\log^{1+\e}(1+|\vec x|)}\bigg)^{\frac1{n+2}}\gg_\e1.
  \]
  But if $\vec x\notin\cS_0\cup\cS_1\cup\ldots\cup\cS_n$, then $\Pi_\bL(\vec x)\geq1$. 
  
  According to \eqref{eq:omega_in_terms_of_hyperbolic_minima} there is a subsequence $(\vec x_{k_j})_{j\in\N}$ of hyperbolic minima of $\Ga$ w.r.t. the sup-norm for which
  \[
    \Pi(\vec x_{k_j})=|\vec x_{k_j}|^{-\delta+o(1)}
    \ \ \text{ as }\ \ j\to\infty.
  \]
  For these points we have
  \[
    \Pi_\bL(\vec x_{k_j})\asymp_\bL
    \Big(\Pi(\vec x_{k_j})^2|\vec x_{k_j}|^n\Big)^{\frac1{n+2}}=|\vec x_{k_j}|^{-\frac{2\delta-n}{n+2}+o(1)}
    \ \ \text{ as }\ \ j\to\infty.
  \]
  If $\delta<n/2$, then
  \[
    \frac{2\delta-n}{n+2}<0,
  \]
  hence in this case the functional $\Pi_\bL(\,\cdot\,)$ is bounded away from zero, which means that $\omega_\bL(\Ga)=0$. If $\delta\geq n/2$, we get
  \[
    \omega_\bL(\Ga)=\frac{2\delta-n}{n+2}\,.
  \]
  Lemma is proved.
\end{proof}

\begin{lemma}\label{l:from_bar_omega_to_bar_omega_L}
  Let $\Ga$ be a rank $2$ lattice in $\R^2$. Assume that all the coefficients of the linear forms in $\bL$ are nonzero and that no two forms in $\bL$ are proportional. Let $\bL$ be logarithmically badly approximable w.r.t. $\Ga$. Assume also that the sequence $(\vec x_k)_{k\in\N}$ of hyperbolic minima of $\Ga$ w.r.t. some norm $|\,\cdot\,|_\ast$ satisfies the relations
  \begin{equation}\label{eq:beta_growth_of_successive_hyperbolic_minima}
    |\vec x_{k+1}|_\ast\asymp|\vec x_k|_\ast^\beta\,,
    \qquad
    \Pi(\vec x_k)\asymp|\vec x_k|_\ast^{(1-\beta^2)/2},
    \qquad
    \beta\geq\sqrt{n+1}.
  \end{equation}
  Then
  \[
    \bar\omega_\bL(\Ga)=
    \dfrac{\beta-(n+1)\beta^{-1}}{n+2}\,.
  \]
\end{lemma}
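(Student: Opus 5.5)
We follow the scheme of the proof of Lemma \ref{l:from_omega_to_omega_L}. All three ingredients carry over: the partition of $\R^2$ into $\cS_0,\cS_1,\ldots,\cS_n$ and the complement of their union, the asymptotic $\Pi_\bL(\vec x)\asymp(\Pi(\vec x)^2|\vec x|^n)^{1/(n+2)}$ on $\cS_0$, and the bound $\Pi_\bL(\vec x)\gg1$ elsewhere (the latter uses logarithmic bad approximability). Since all norms are equivalent, we work with $|\cdot|_\ast$ throughout.

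The key fact is that $\psi_{\Ga,\bL}(t)$ is governed by the hyperbolic minima of $\Ga$ itself. Suppose $\vec x\in\Ga$, $\vec x\neq\vec 0$, $|\vec x|_\ast\leq t$, and $\Pi_\bL(\vec x)$ is small, so $\vec x\in\cS_0$. Let $\vec x_k$ be the hyperbolic minimum with $|\vec x_k|_\ast\leq|\vec x|_\ast<|\vec x_{k+1}|_\ast$. Then $\Pi(\vec x)\geq\Pi(\vec x_k)$, because $\bar\cH(\vec x_k)\cap\Ga=\{\vec 0\}$. Hence
\[
  \Pi_\bL(\vec x)\gg\big(\Pi(\vec x_k)^2|\vec x|_\ast^{\,n}\big)^{1/(n+2)}\geq\big(\Pi(\vec x_k)^2|\vec x_k|_\ast^{\,n}\big)^{1/(n+2)}.
\]
So, up to constants,
\[
  \psi_{\Ga,\bL}(t)\asymp\min\Big(1,\ \min_{|\vec x_k|_\ast\leq t}\big(\Pi(\vec x_k)^2|\vec x_k|_\ast^{\,n}\big)^{1/(n+2)}\Big).
\]
Substituting \eqref{eq:beta_growth_of_successive_hyperbolic_minima}, and writing $X_k=|\vec x_k|_\ast$, we get $\Pi(\vec x_k)^2|\vec x_k|_\ast^n\asymp X_k^{1-\beta^2+n}$. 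Since $\beta\geq\sqrt{n+1}$, the exponent $1+n-\beta^2$ is $\leq0$. Therefore these values are nonincreasing in $k$ (up to constants, for large $k$), the minimum is attained at the largest $k$ with $X_k\leq t$, and the truncation at $1$ is harmless.

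Consequently, for $X_k\leq t<X_{k+1}$ we have $\psi_{\Ga,\bL}(t)\asymp X_k^{-(\beta^2-n-1)/(n+2)}$. The quantity $t^\gamma\psi_{\Ga,\bL}(t)$ is maximised over this interval as $t\to X_{k+1}\asymp X_k^\beta$, where it is comparable to
\[
  X_k^{\beta\gamma-(\beta^2-n-1)/(n+2)}.
\]
This stays bounded as $k\to\infty$ if and only if $\gamma\leq(\beta^2-n-1)/(\beta(n+2))=\frac{\beta-(n+1)\beta^{-1}}{n+2}$, which is the claimed value of $\bar\omega_\bL(\Ga)$. For the lower bound on $\bar\omega_\bL$ we take $\gamma$ slightly below this value; for the upper bound we take $\gamma$ slightly above it and use the sequence $t=X_{k+1}-0$.

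The main obstacle is the matching estimate from below. We must show that no lattice point with $X_k\leq|\vec x|_\ast<X_{k+1}$ gives a smaller $\Pi_\bL$ than $\vec x_k$ does, up to constants. The point is that the factor $|\vec x|^n$ only helps, while $\Pi(\vec x)\geq\Pi(\vec x_k)$ holds by the emptiness of $\bar\cH(\vec x_k)$. One also has to check that points in $\cS_j$ with $j\geq1$, or outside all the $\cS_j$, cannot spoil the estimate, which holds because there $\Pi_\bL\gg1$. A final technical point is that the implied constants from $\asymp$ in $|\vec x_{k+1}|_\ast\asymp|\vec x_k|_\ast^\beta$ only affect bounded factors and hence not the exponent.
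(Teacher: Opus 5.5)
Your proposal is correct and is essentially the paper's proof. The paper likewise shows that every point of $\Ga$ with $|\vec x_k|_\ast\leq|\vec x|_\ast<|\vec x_{k+1}|_\ast$ satisfies $\Pi_\bL(\vec x)\gg\Pi_\bL(\vec x_k)$, using $\cS_0$ and the emptiness of $\bar\cH(\vec x_k)$; it uses $\beta\geq\sqrt{n+1}$ to get $\Pi_\bL(\vec x_{k+1})\ll\Pi_\bL(\vec x_k)$, and then computes $\Pi_\bL(\vec x_k)\asymp|\vec x_{k+1}|_\ast^{-(\beta-(n+1)\beta^{-1})/(n+2)}$. The only difference is cosmetic: the paper packages this as an $\bL$-analogue of the hyperbolic-minima formula for $\bar\omega$, whereas you evaluate $t^\gamma\psi_{\Ga,\bL}(t)$ directly on each interval $[X_k,X_{k+1})$.
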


\begin{proof}
  Define the sets $\cS_0,\cS_1,\ldots,\cS_n$ by \eqref{eq:S_0}, \eqref{eq:S_j}. As we showed in the proof of Lemma \ref{l:from_omega_to_omega_L}, we have $\Pi_\bL(\vec x)\gg_\bL1$ for $\vec x\notin\cS_0$. For $\vec x\in\cS_0\backslash\{\vec 0\}$, due to the fact that the coefficients of the linear forms are nonzero, we have
  \begin{equation}\label{eq:prod_ell_in_S_0}
    \prod_{j=1}^n|\ell_j(\vec x)|\asymp_\bL|\vec x|_\ast^n\,.
  \end{equation}
  Consider the sequence $(\vec x_k)_{k\in\N}$ of hyperbolic minima of $\Ga$ w.r.t. $|\,\cdot\,|_\ast$\,. By the definition of successive hyperbolic minima and relation \eqref{eq:prod_ell_in_S_0}, for each $k\in\N$ and every $\vec x\in\cS_0$ such that
  \[
    |\vec x_k|_\ast\leq|\vec x|_\ast<|\vec x_{k+1}|_\ast,
  \]
  we have
  \[
    \Pi_\bL(\vec x)\asymp_\bL
    \Big(\Pi(\vec x)^2|\vec x|_\ast^n\Big)^{\frac1{n+2}}\geq
    \Big(\Pi(\vec x_k)^2|\vec x_k|_\ast^n\Big)^{\frac1{n+2}}\asymp_\bL
    \Pi_\bL(\vec x_k).
  \]
  At the same time by \eqref{eq:beta_growth_of_successive_hyperbolic_minima} we have
  \begin{multline*}
    \Pi_\bL(\vec x_{k+1})
    \asymp_\bL
    \Big(\Pi(\vec x_{k+1})^2|\vec x_{k+1}|_\ast^n\Big)^{\frac1{n+2}}
    \asymp
    |\vec x_{k+1}|_\ast^{-\frac{\beta^2-(n+1)}{n+2}}
    \leq \\ \leq
    |\vec x_k|_\ast^{-\frac{\beta^2-(n+1)}{n+2}}
    \asymp
    \Big(\Pi(\vec x_k)^2|\vec x_k|_\ast^n\Big)^{\frac1{n+2}}
    \asymp_\bL
    \Pi_\bL(\vec x_k).
  \end{multline*}
  Therefore, the following analogue of \eqref{eq:bar_omega_in_terms_of_hyperbolic_minima} holds:
  \begin{equation}\label{eq:bar_omega_L_in_terms_of_hyperbolic_minima}
    \bar\omega_\bL(\Ga)=\sup\Big\{ \gamma\in\R \,\Big|\, \exists\,K\in\N:\,\forall k\geq K\ \Pi_\bL(\vec x_k)\leq|\vec x_{k+1}|_\ast^{-\gamma} \Big\}.
  \end{equation}
  Applying \eqref{eq:prod_ell_in_S_0} and \eqref{eq:beta_growth_of_successive_hyperbolic_minima} once again, we get
  \[
    \Pi_\bL(\vec x_k)\asymp_\bL
    \Big(\Pi(\vec x_k)^2|\vec x_k|_\ast^n\Big)^{\frac1{n+2}}\asymp
    |\vec x_k|_\ast^{-\frac{\beta^2-(n+1)}{n+2}}\asymp
    |\vec x_{k+1}|_\ast^{-\frac{\beta-(n+1)\beta^{-1}}{n+2}}.
  \]
  Hence, in view of \eqref{eq:bar_omega_L_in_terms_of_hyperbolic_minima}, it follows immediately that
  \[
    \bar\omega_\bL(\Ga)=
    \dfrac{\beta-(n+1)\beta^{-1}}{n+2}\,.
  \]
\end{proof}

\begin{remark*}
  In the proof of Lemma \ref{l:from_bar_omega_to_bar_omega_L} we actually showed that the hyperbolic minima of $\Ga$ play the role of analogues of hyperbolic minima (up to some bounded factors) for the functional $\Pi_\bL(\,\cdot\,)$. We can give a precise definition and call a nonzero point $\vec x\in\Ga$ an \emph{$\bL$-hyperbolic minimum} of $\Ga$ if there are no nonzero points $\vec y\in\Ga$ such that
  \[
    |\vec y|\leq|\vec x|
    \qquad\text{ and }\qquad
    \Pi_\bL(\vec y)<\Pi_\bL(\vec x).
  \]
  And of course, same as in the case of usual hyperbolic minima, we can define successive $\bL$-hyperbolic minima and formulate direct analogues of \eqref{eq:omega_in_terms_of_hyperbolic_minima}, \eqref{eq:bar_omega_in_terms_of_hyperbolic_minima} (see relation \eqref{eq:bar_omega_L_in_terms_of_hyperbolic_minima} in the proof of Lemma \ref{l:from_bar_omega_to_bar_omega_L}). 
\end{remark*}

\begin{lemma}\label{l:providing_log_bad}
  Let $\Ga$ be a rank $2$ lattice in $\R^2$. Assume that all the coefficients of the linear forms in $\bL$ are nonzero. Then, for almost every real $\tau$ the $n$-tuple $\bL$ is logarithmically badly approximable w.r.t. $D_\tau\Ga$, where
  \[
    D_\tau=
    \begin{pmatrix}
      2^\tau & 0\phantom{-} \\
      0 & 2^{-\tau}
    \end{pmatrix}.
  \]
\end{lemma}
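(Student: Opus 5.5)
A Borel--Cantelli argument over the parameter $\tau$ should do it. Fix one form $\ell=\ell_i$, say $\ell(\vec x)=ax_1+bx_2$ with $ab\neq0$. Since there are finitely many forms and a countable union of null sets is null, it suffices to treat $\ell$ alone. It also suffices to treat $\tau$ in a bounded interval $I=[T,T+1]$. For $\vec x=(x_1,x_2)\in\Ga$ we have $D_\tau\vec x=(2^\tau x_1,2^{-\tau}x_2)$. Hence
\[
  \ell(D_\tau\vec x)=a2^\tau x_1+b2^{-\tau}x_2,
\]
and for $\tau\in I$ we get $|D_\tau\vec x|\asymp_I|\vec x|$. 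So we need that, for a.e. $\tau\in I$, only finitely many $\vec x\in\Ga\setminus\{\vec 0\}$ satisfy
\[
  |a2^\tau x_1+b2^{-\tau}x_2|\leq\frac{1}{|\vec x|\log^{1+\e}(1+|\vec x|)}.
\]
Finitely many exceptions are then absorbed by decreasing $c$; note no $\ell(D_\tau\vec x)$ vanishes outside a countable set of $\tau$, which we discard. We take any fixed $\e>0$, e.g. $\e=1$.

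\textbf{Measure bound for one lattice point.} Let $E_{\vec x}$ be the set of $\tau\in I$ satisfying the inequality above. If $x_1x_2\neq0$, write $f(\tau)=a2^\tau x_1+b2^{-\tau}x_2$. Its derivative is
\[
  f'(\tau)=\ln2\,(a2^\tau x_1-b2^{-\tau}x_2),
\]
and $f^2$ and $(f')^2/\ln^22$ add up to $2(a^22^{2\tau}x_1^2+b^22^{-2\tau}x_2^2)\asymp_I|\vec x|^2$. So at any point where $|f|$ is small, $|f'|\gg|\vec x|$. The function $f$ is a sum of two exponentials, so it has at most one zero, and $f'$ has at most one zero as well; hence $I$ splits into at most two intervals of monotonicity. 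On the part of $I$ where $|f|\leq\eta$ with $\eta<c_0|\vec x|$, we have $|f'|\gg|\vec x|$. Therefore
\[
  |E_{\vec x}|\ll\frac{\eta}{|\vec x|}=\frac{1}{|\vec x|^2\log^{1+\e}(1+|\vec x|)}.
\]
If $x_1=0$ or $x_2=0$, then $|f|\asymp|\vec x|$ and $E_{\vec x}$ is empty for large $|\vec x|$.

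\textbf{Summation.} The number of points of $\Ga$ with $2^m\leq|\vec x|<2^{m+1}$ is $\ll4^m$. So
\[
  \sum_{\vec x}|E_{\vec x}|\ll\sum_m\frac{4^m}{4^m m^{1+\e}}<\infty.
\]
By Borel--Cantelli, almost every $\tau\in I$ lies in only finitely many $E_{\vec x}$. This gives the lemma, with the constant $c$ depending on $\tau$, which is allowed.

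\textbf{Main obstacle.} The only delicate point is the uniform lower bound $|f'|\gg|\vec x|$ on the set where $|f|$ is small, together with the bounded number of monotonicity pieces. The identity for $f^2+(f'/\ln2)^2$ handles this cleanly, with constants depending only on $a,b,I$.
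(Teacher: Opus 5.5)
Your proof is correct, but it takes a different route from the paper. The paper writes $\Gamma=A\mathbb{Z}^2$ and transposes, so the question becomes whether the forms $A^\top D_\tau\ell_i$ are logarithmically badly approximable on $\mathbb{Z}^2$. The coefficient ratio $a(\tau)/b(\tau)$ of such a form is a M\"obius function of $2^{2\tau}$ with nonzero determinant $ab\det A$. So on any interval free of zeros of $b(\tau)$ it sweeps out an interval. The paper then applies the convergence half of Khintchine's theorem to this ratio, using that $\sum_t 1/(t\log^{1+\varepsilon}(1+t))<\infty$, and pulls the conclusion back to almost every $\tau$.

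You instead run Borel--Cantelli directly in the parameter $\tau$. The identity $f^2+(f'/\ln 2)^2=2(a^2 2^{2\tau}x_1^2+b^2 2^{-2\tau}x_2^2)$ acts as a transversality estimate giving $|E_{\vec x}|\ll \eta/|\vec x|$. A dyadic count of lattice points then finishes the argument.

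What each route buys:
\begin{itemize}
\item The paper's argument is shorter because it hands the metric part to a classical theorem. The cost is a change of variables, which the paper only sketches: one needs that $\tau\mapsto a(\tau)/b(\tau)$ pulls null sets back to null sets, and the zeros of $b(\tau)$ must be excluded.
\item Your argument is self-contained; in effect it re-proves the convergence half of Khintchine's theorem along this one-parameter curve. It works in the original coordinates of $\Gamma$, and it would apply verbatim to any smooth one-parameter family of forms with a similar non-degeneracy condition.
\end{itemize}

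Your handling of constants is also more careful. You let $c$ depend on $\tau$, which is all the definition requires. The paper's sentence places $c=c(\varepsilon)$ before ``for almost every $\tau$''. Read literally, that is stronger than what Khintchine's theorem gives, since for any fixed $c$ a positive-measure set of $\tau$ near the points where $a(\tau)/b(\tau)$ is rational violates it. The uniform version is not needed for the lemma.
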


\begin{proof}
  Let $\Ga=A\Z^2$,
  \[
    A=
    \begin{pmatrix}
      a_{1,1} & a_{1,2} \\
      a_{2,1} & a_{2,2}
    \end{pmatrix}
    \in\Gl(2,\R).
  \]
  For convenience, let us agree to view $\ell_i$ as column vectors. Then $\bL$ is logarithmically badly approximable w.r.t. $D_\tau\Ga=D_\tau A\Z^2$ if and only if the $n$-tuple
  \[
    A^\top D_\tau\bL=(A^\top D_\tau\ell_1,\ldots,A^\top D_\tau\ell_n)
  \]
  is logarithmically badly approximable w.r.t. $\Z^2$. Let us fix $i$ and assume that the coefficients of $\ell_i$ are equal to $a$ and $b$. Then
  \[
    A^\top D_\tau\ell_i=
    \begin{pmatrix}
      2^\tau a_{1,1} & 2^{-\tau}a_{2,1} \\
      2^\tau a_{1,2} & 2^{-\tau}a_{2,2} 
    \end{pmatrix}
    \binom ab=
    \begin{pmatrix}
      2^\tau a_{1,1}a + 2^{-\tau}a_{2,1}b \\
      2^\tau a_{1,2}a + 2^{-\tau}a_{2,2}b
    \end{pmatrix}=
    \begin{pmatrix}
      a(\tau) \\
      b(\tau)
    \end{pmatrix}.
  \]
  If $\tau$ runs over some interval that does not contain a zero of the function $b(\tau)$, the ratio $a(\tau)/b(\tau)$ also runs over some interval, since $ab\neq0$ and $a_{1,1}a_{2,2}-a_{1,2}a_{2,1}\neq0$. Further, for any fixed $\e>0$ the series
  \[
    \sum_{t=1}^{\infty}
    \frac 1{t\log^{1+\e}(1+t)}
  \]
  converges. Therefore, by Khintchine's theorem (see \cite{khintchine_CF}), given any $\e>0$, there is a constant $c=c(\e)>0$ such that for almost every $\tau\in\R$ the inequality
  \[
    |a(\tau)x_1+b(\tau)x_2|>\frac c{|\vec x|\log^{1+\e}(1+|\vec x|)}
  \]
  holds for every nonzero $\vec x=(x_1,x_2)\in\Z^2$. Taking into account that the intersection of finitely many sets of full measure is also a set of full measure, we get the desired statement.
\end{proof}

\section{Constructing a two-dimensional subspace and a rank $2$ lattice in it}\label{sec:rank_2_sublattice_construction}

Let $d\in\N$, $d\geq3$. Set $n=d-2$.

Take arbitrary real numbers $a_1,\ldots,a_n$, $b_1,\ldots,b_n$ such that
\begin{equation}\label{eq:bounds_for_a_i_and_b_i}
  0<a_i<1/2,
  \qquad
  0<b_i<1/2,
  \qquad
  i=1,\ldots,n.
\end{equation}
Define vectors $\vec a,\vec b\in\R^d$ and a matrix $L$ by
\[
  \vec a=(1,0,a_1,\ldots,a_n),
  \qquad
  \vec b=(0,1,b_1,\ldots,b_n),
  \qquad
  L=
  \begin{pmatrix}
    1 & 0 \\
    0 & 1 \\
    a_1 & b_1 \\
    \vdots & \vdots \\
    a_n & b_n
  \end{pmatrix}.
\]
Consider the subspace
\[
  \cL=L\R^2=\R\vec a+\R\vec b
\]
of $\R^d$. For each lattice $\Ga=A\Z^2$, $A\in\Gl(2,\R)$, set
\[
  \Ga_\cL=L\Ga=
  \begin{pmatrix}
    1 & 0 \\
    0 & 1 \\
    a_1 & b_1 \\
    \vdots & \vdots \\
    a_n & b_n
  \end{pmatrix}
  A\Z^2.
\]
Then $\Ga_\cL$ is a rank $2$ lattice in $\cL$ and all the points of $\Ga$ are obtained from the points of $\Ga_\cL$ by deleting the last $n$ coordinates.

Let us define $\bL=(\ell_1,\ldots,\ell_n)$ as follows. For each $i=1,\ldots,n$ and every $\vec x=(x_1,x_2)\in\R^2$, let us set
\[
  \ell_i(\vec x)=a_ix_1+b_ix_2.
\]

\begin{lemma}\label{l:lifting_up_the_functionals}
  Given a point $\vec x=(x_1,x_2)\in\R^2$, consider the point $L\vec x=x_1\vec a+x_2\vec b$ in $\cL$. Then
  \[
    |L\vec x|=|\vec x|
    \qquad\text{ and }\qquad
    \Pi(L\vec x)=\Pi_\bL(\vec x).
  \]
\end{lemma}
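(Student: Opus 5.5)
The plan is to compute the coordinates of $L\vec x$ explicitly and then check the two identities one at a time. By the definition of $L$,
\[
  L\vec x=x_1\vec a+x_2\vec b=\big(x_1,\,x_2,\,a_1x_1+b_1x_2,\,\ldots,\,a_nx_1+b_nx_2\big)=\big(x_1,\,x_2,\,\ell_1(\vec x),\,\ldots,\,\ell_n(\vec x)\big).
\]
So the first two coordinates of $L\vec x$ are those of $\vec x$, and the remaining $n$ coordinates are the values of the forms in $\bL$.

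For the product identity, recall that $d=n+2$ and that $\Pi$ on $\R^d$ is the $d$-th root of the absolute value of the product of all coordinates. Hence
\[
  \Pi(L\vec x)=\bigg|x_1x_2\prod_{i=1}^n\ell_i(\vec x)\bigg|^{1/(n+2)}=\Pi_\bL(\vec x),
\]
which is exactly the definition of $\Pi_\bL$. Nothing more is needed here.

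For the norm identity, the inequality $|L\vec x|\geq\max(|x_1|,|x_2|)=|\vec x|$ is immediate, since the first two coordinates are among those of $L\vec x$. For the reverse inequality I would use the bounds \eqref{eq:bounds_for_a_i_and_b_i}. For each $i$,
\[
  |\ell_i(\vec x)|\leq a_i|x_1|+b_i|x_2|<\tfrac12|\vec x|+\tfrac12|\vec x|=|\vec x|
\]
when $\vec x\neq\vec 0$. The case $\vec x=\vec 0$ is trivial. Therefore no extra coordinate exceeds $|\vec x|$, and $|L\vec x|=|\vec x|$.

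There is no real obstacle in this argument. The only point to watch is that the bound $a_i,b_i<1/2$ is what guarantees that the sup-norm is attained on one of the first two coordinates. Without it, only $|L\vec x|\asymp|\vec x|$ would hold, not equality.
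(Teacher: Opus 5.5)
Your proposal is correct and follows the paper's proof essentially verbatim: you write out the coordinates $(x_1,x_2,\ell_1(\vec x),\ldots,\ell_n(\vec x))$, read off $\Pi(L\vec x)=\Pi_\bL(\vec x)$ from the definitions with $d=n+2$, and use $0<a_i,b_i<1/2$ to get $|\ell_i(\vec x)|\leq|\vec x|$, so the sup-norm is attained on the first two coordinates. The strict inequality you note for $\vec x\neq\vec 0$ is correct but not needed, since the paper's non-strict bound already suffices.
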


\begin{proof}
  In the vector notation we have
  \[
    x_1\vec a+x_2\vec b=
    \begin{pmatrix}
      x_1 \\
      x_2 \\
      a_1x_1+b_1x_2 \\
      \vdots \\
      a_nx_1+b_nx_2
    \end{pmatrix}=
    \begin{pmatrix}
      x_1 \\
      x_2 \\
      \ell_1(\vec x) \\
      \vdots \\
      \ell_n(\vec x)
    \end{pmatrix}.
  \]
  This implies immediately that
  \[
    \Pi(L\vec x)=
    \bigg|x_1x_2\prod_{1\leq i\leq n}\ell_i(\vec x)\bigg|^{1/d}=
    \Pi_\bL(\vec x).
  \]
  Taking into account \eqref{eq:bounds_for_a_i_and_b_i}, we get
  \[
    |\ell_i(\vec x)|=
    |a_ix_1+b_ix_2|\leq
    \frac12|x_1|+\frac12|x_2|\leq
    \max(|x_1|,|x_2|).
  \]
  Hence $\max\big(|x_1|,|x_2|,|\ell_1(\vec x)|,\ldots,|\ell_n(\vec x)|\big)=\max(|x_1|,|x_2|)=|\vec x|$.
\end{proof}

\section{Complementing $\Ga_\cL$ to a lattice of rank $d$}\label{sec:complement_to_La}

Let us choose a basis in $\Ga_\cL$ consisting of vectors with positive coordinates. This is possible due to the choice of $\vec a$ and $\vec b$. Denote the chosen basis vectors by $\vec v_1$, $\vec v_2$. Let vectors $\vec e_1,\ldots,\vec e_n$ complement $\vec v_1,\vec v_2$ to a basis of a lattice $\La$ of rank $d$. Then the vectors $\vec e_1,\ldots,\vec e_n,\vec v_1,\vec v_2$ are linearly independent and
\[
  \La=\Z\vec e_1+\ldots+\Z\vec e_n+\Z\vec v_1+\Z\vec v_2.
\]

\begin{lemma}\label{l:2dim_provides_all}
  Define the subspace $\cL$ and the lattices $\Ga$, $\Ga_\cL$ as in Section \ref{sec:rank_2_sublattice_construction}. Assume that there are constants $\e>0$ and $c>0$ such that for each $\vec v\in\La\backslash\Ga_\cL$ we have
  \begin{equation}\label{eq:2dim_provides_all_condition}
    \Pi(\vec v)>\frac c{\log^{1+\e}(1+|\vec v|)}\,.
  \end{equation}
  Then
  \[
    \omega(\La)=\max(0,\omega_\bL(\Ga))
    \qquad\text{ and }\qquad
    \bar\omega(\La)=\max(0,\bar\omega_\bL(\Ga)).
  \]
\end{lemma}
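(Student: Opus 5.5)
The plan is to split the nonzero points of $\La$ into two classes: those lying in $\Ga_\cL$, and those in $\La\backslash\Ga_\cL$. Let $\psi_\La$ be the irrationality measure function of $\La$. For $t\geq1$ put
\[
  \psi'(t)=\min_{\substack{\vec v\in\Ga_\cL\\0<|\vec v|\leq t}}\Pi(\vec v),
  \qquad
  \psi''(t)=\inf_{\substack{\vec v\in\La\backslash\Ga_\cL\\|\vec v|\leq t}}\Pi(\vec v).
\]
Then $\psi_\La(t)=\min(\psi'(t),\psi''(t))$ for all large $t$.

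By Lemma \ref{l:lifting_up_the_functionals}, each $\vec v\in\Ga_\cL$ can be written as $\vec v=L\vec x$ with $\vec x\in\Ga$, and then $|\vec v|=|\vec x|$ and $\Pi(\vec v)=\Pi_\bL(\vec x)$. Since $L$ is a bijection from $\Ga$ onto $\Ga_\cL$, this gives $\psi'(t)=\psi_{\Ga,\bL}(t)$ exactly. Hypothesis \eqref{eq:2dim_provides_all_condition} gives
\[
  \psi''(t)\geq\frac{c}{\log^{1+\e}(1+t)}\,,
\]
which decays slower than any negative power of $t$. Minkowski's theorem gives $\psi_\La(t)\ll1$. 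Hence
\[
  \psi_\La(t)=\min\Big(\psi_{\Ga,\bL}(t),\,t^{o(1)}\Big),
\]
where the second term is bounded above by a constant and below by $t^{-o(1)}$.

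Now I would compare the exponents, starting with $\omega$. Suppose $\gamma>0$ and $\liminf_{t\to\infty}t^\gamma\psi_{\Ga,\bL}(t)<\infty$. Then the same holds for $\psi_\La\leq\psi_{\Ga,\bL}$, so $\omega(\La)\geq\omega_\bL(\Ga)$, and $\omega(\La)\geq0$ holds in any case. Conversely, suppose $\gamma>0$ and $\liminf_{t\to\infty}t^{\gamma}\psi_\La(t)<\infty$. Along a sequence $t_j\to\infty$ we then have $\psi_\La(t_j)\ll t_j^{-\gamma}$. For large $j$ this is smaller than the $\psi''$ lower bound $c\log^{-1-\e}(1+t_j)$, so the minimum must be attained by $\psi'$. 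This gives $\liminf_{t\to\infty}t^\gamma\psi_{\Ga,\bL}(t)<\infty$. Taking the supremum over such $\gamma$ yields $\omega(\La)\leq\max(0,\omega_\bL(\Ga))$.

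The uniform exponent is handled the same way, with $\limsup$ in place of $\liminf$. For any $\gamma>0$ with $\limsup_{t\to\infty}t^\gamma\psi_\La(t)<\infty$, the bound $\psi_\La(t)\ll t^{-\gamma}$ holds for all large $t$. By the same logarithmic comparison, $\psi_\La(t)=\psi_{\Ga,\bL}(t)$ for all large $t$, so $\limsup_{t\to\infty}t^\gamma\psi_{\Ga,\bL}(t)<\infty$. The reverse inequality again follows from $\psi_\La\leq\psi_{\Ga,\bL}$. Together with $\bar\omega(\La)\geq0$ this gives $\bar\omega(\La)=\max(0,\bar\omega_\bL(\Ga))$.

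The only delicate point is the boundary case in which $\omega_\bL(\Ga)$ or $\bar\omega_\bL(\Ga)$ is $\leq0$. There the claim reduces to $\omega(\La)=0$, respectively $\bar\omega(\La)=0$. This follows because the argument above shows that no $\gamma>0$ is admissible for $\La$, since any admissible $\gamma>0$ for $\La$ would also be admissible for $\Ga$ with $\bL$.

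A subtlety I would check is that $\Pi$ might vanish at some point of $\La$. This cannot happen off $\Ga_\cL$, by \eqref{eq:2dim_provides_all_condition}. On $\Ga_\cL$ it happens exactly when $\Pi_\bL$ vanishes at a point of $\Ga$, and then $\psi_\La=\psi_{\Ga,\bL}=0$ eventually. Both exponents are then $+\infty$ on both sides, so the formula still holds. Finally, the infinite values $\omega=+\infty$ and $\bar\omega=+\infty$ are covered by the same supremum argument.
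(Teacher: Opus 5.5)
Your proof is correct and follows essentially the same idea as the paper. On $\Ga_\cL$ the functional $\Pi$ coincides with $\Pi_\bL$ by Lemma \ref{l:lifting_up_the_functionals}, while off $\Ga_\cL$ it decays only logarithmically, so $\psi_\La=\psi_{\Ga,\bL}$ whenever $\psi_\La$ is power-small. Your per-$\gamma$ comparison is slightly cleaner than the paper's case split by the signs of $\omega_\bL(\Ga)$ and $\bar\omega_\bL(\Ga)$, and it explicitly covers the case $\omega_\bL(\Ga)>0\geq\bar\omega_\bL(\Ga)$, which the paper leaves implicit.
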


\begin{proof}
  If $\omega_\bL(\Ga)\leq0$ (see \eqref{eq:trivial_inequalities_for_omega_L}), then by Lemma \ref{l:lifting_up_the_functionals} and condition \eqref{eq:2dim_provides_all_condition}, for each nonzero $\vec v\in\La$, we have
  \[
    \Pi(\vec v)>|\vec v|^{o(1)}
    \ \ \text{ as }\ \ |\vec v|\to\infty.
  \]
  In this case we get $\omega(\La)=\bar\omega(\La)=0$.
  
  Let $\omega_\bL(\Ga)=\delta>0$. For $\vec v\in\La\backslash\Ga_\cL$, condition \eqref{eq:2dim_provides_all_condition} implies the estimate
  \[
    \Pi(\vec v)\gg|\vec v|^{-\delta/3}.
  \]
  Thus, every $\vec v\in\La$ with $|\vec v|$ large enough satisfying the inequality
  \[
    \Pi(\vec v)<|\vec v|^{-2\delta/3}
  \]
  belongs to $\Ga_\cL$\,. Taking into account Lemma \ref{l:lifting_up_the_functionals}, we get that for $\gamma>2\delta/3$
  \[
    \liminf_{t\to+\infty}\big(t^\gamma\psi_{\La}(t)\big)=
    \liminf_{t\to+\infty}\big(t^\gamma\psi_{\Ga,\bL}(t)\big),
  \]
  whence it follows immediately that $\omega(\La)=\omega_\bL(\Ga)$.
  
  Now, let $\bar\omega_\bL(\Ga)=\delta>0$. As we noted previously, for $\vec v\in\La\backslash\Ga_\cL$, we have the lower estimate
  \[
    \Pi(\vec v)\gg|\vec v|^{-\delta/3},
  \]
  whereas for the function $\psi_{\La,\bL}(t)$, we have the upper estimate
  \[
    \psi_{\La,\bL}(t)\ll t^{-2\delta/3},
  \]
  due to the defining equality \eqref{eq:bar_omega_L_definition}. Taking into account Definition \ref{def:lattice_exponents_liminf_limsup}, relation \eqref{eq:bar_omega_L_definition}, and Lemma \ref{l:lifting_up_the_functionals}, we get
  \[
    \psi_{\La}(t)=
    \min_{\substack{ \vec v\in\La \\ 0<|\vec v|\leq t}}\Pi(\vec v)\leq
    \min_{\substack{ \vec v\in\Ga_\cL \\ 0<|\vec v|\leq t}}\Pi(\vec v)=
    \min_{\substack{ \vec x\in\Ga \\ 0<|\vec x|\leq t}}\Pi_\bL(\vec x)=
    \psi_{\Ga,\bL}(t)\ll t^{-2\delta/3}.
  \]
  Therefore, we have
  \[
    \psi_{\La}(t)=\psi_{\Ga,\bL}(t)
  \]
  for every $t$ large enough, whence it follows immediately that $\bar\omega(\La)=\bar\omega_\bL(\Ga)$.
\end{proof}

\section{Main metric lemma}\label{sec:metric_lemma}

Let the points $\vec v_1,\vec v_2$ and the lattice $\Ga_\cL=\Z\vec v_1+\Z\vec v_2$ be as in Section \ref{sec:complement_to_La}. Let, as before, $n=d-2$. Given points $\vec b_1,\ldots,\vec b_n\in\R^d$, $\vec b_i=(b_{i,1},\ldots,b_{i,d})$, $i=1,\ldots,n$, consider unit cubes
\[
  \cB_i=\Big\{\vec x=(x_1,\ldots,x_d)\in\R^d \,\Big|\, b_{i,j}\leq|x_j|\leq b_{i,j}+1,\ j=1,\ldots,d \Big\}.
\]
Set also $\bB=\cB_1\times\ldots\times\cB_n$. Then $\bB$ is also a unit cube in $\R^{nd}$. For each choice of $\vec e_1\in\cB_1,\ \ldots,\ \vec e_n\in\cB_n$ the $n$-tuple $\bE=(\vec e_1,\ldots,\vec e_n)$ satisfies
\[
  \bE\in\bB.
\]

\begin{lemma}\label{l:main_metric_argument}
  Let the cube\,\ $\bB$ be chosen so that for each $\bE=(\vec e_1,\ldots,\vec e_n)\in\bB$ the vectors $\vec v_1,\vec v_2,\vec e_1,\ldots,\vec e_n$ are linearly independent. Let us set for each $\bE\in\bB$
  \[
    \La_\bE=\Z\vec e_1+\ldots+\Z\vec e_n+\Z\vec v_1+\Z\vec v_2.
  \]
  Then for every $\e>0$ there is a constant $c=c(\e)>0$ such that for almost every $\bE\in\bB$ the inequality
  \[
    \Pi(\vec x)>\frac c{\log^{1+\e}(1+|\vec x|)}
  \]
  holds for every $\vec x\in\La_\bE\backslash\Ga_\cL$.
\end{lemma}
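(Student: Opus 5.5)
Fix $\e>0$ and argue via Borel--Cantelli on the parameter space $\bB$. A point of $\La_\bE\backslash\Ga_\cL$ has the form
\[
  \vec x=\vec x(\bE;\vec m,\vec w)=m_1\vec e_1+\ldots+m_n\vec e_n+\vec w,
  \qquad \vec m=(m_1,\ldots,m_n)\in\Z^n\backslash\{\vec 0\},\quad \vec w\in\Ga_\cL .
\]
Since $\bB$ is compact and the vectors $\vec v_1,\vec v_2,\vec e_1,\ldots,\vec e_n$ are linearly independent on $\bB$, the lattices $\La_\bE$ are uniformly comparable to a fixed one. Hence $|\vec x|\asymp|\vec m|+|\vec w|$ uniformly in $\bE\in\bB$, and the number of pairs $(\vec m,\vec w)$ with $|\vec x|\le T$ is $\ll T^d$.

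For each pair $(\vec m,\vec w)$ let $E(\vec m,\vec w)\subset\bB$ be the set of $\bE$ with
\[
  \Pi(\vec x(\bE;\vec m,\vec w))\le c\log^{-1-\e}(1+|\vec x|).
\]
Equivalently, $\prod_j|x_j|\le\eta^d$ with $\eta=c\log^{-1-\e}(1+R)$ and $R\asymp|\vec m|+|\vec w|$. The aim is a measure bound of the form
\[
  \mathrm{mes}\,E(\vec m,\vec w)\ll \eta^d\,\log^{d-1}(1/\eta)\,R^{-d}\cdot(\text{polylog }R).
\]
Summed over all pairs, this gives a convergent series. I would group pairs dyadically by $R\sim2^k$: there are $\asymp2^{kd}$ pairs, and each contributes $\ll k^{-(1+\e)d}k^{O(1)}2^{-kd}$. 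The exponent $(1+\e)d$ must beat the accumulated $\log$ factors. If it does not, I would shrink the exponent and adjust $\e$, the bound holding for every $\e>0$. Borel--Cantelli then shows that for a.e.\ $\bE$ only finitely many pairs violate the inequality. Decreasing $c$ to absorb those finitely many exceptions (each has $\Pi>0$ almost surely, since $\Pi(\vec x)=0$ defines a null set for fixed $\vec m\ne\vec 0$) gives the lemma. The constant $c$ can be chosen uniformly after intersecting full-measure sets over $c=1/N$.

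The measure estimate is the heart of the argument. Fix $(\vec m,\vec w)$ and pick an index $i$ with $|m_i|=|\vec m|\ge1$. The coordinates of $\vec x$ are
\[
  x_j=m_ie_{i,j}+(\text{terms independent of }\vec e_i).
\]
So for fixed other variables, the map $\vec e_i\mapsto\vec x$ is a translation composed with the dilation by $m_i$. The Lebesgue measure of $\{\vec e_i\in\cB_i:\prod|x_j|\le\eta^d\}$ equals $|m_i|^{-d}$ times the measure of the set $\{\vec y\in m_i\cB_i+\vec z:\prod|y_j|\le\eta^d\}$, which lies in a box of side $|m_i|$ and center of size $\ll R$.

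The hyperbolic-cross volume inside a box $\prod[z_j,z_j+|m_i|]$ is where the difficulty lies, and I expect this to be the main obstacle. When $\vec x$ is dominated by $\vec w$, i.e.\ $|\vec w|\gg|\vec m|$, the box sits far from the origin. There the coordinates in which the box is away from $0$ have $|y_j|\asymp R$ or at least a definite size, so the constraint forces a small number of coordinates into thin slabs. I would bound the volume by splitting according to which coordinates are $\le$ some dyadic scale and integrating $\prod|y_j|\le\eta^d$ in logarithmic coordinates. This gives volume $\ll|m_i|^{d-1}\eta^d R^{-(d-1)}\cdot\log^{d-1}$, with the worst case a single slab.

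Dividing by $|m_i|^d$ and summing over $\vec w$ with $|\vec w|\sim R$ (there are $\asymp R^2$ such $\vec w$) and over $\vec m$ with $|\vec m|\le R$ still needs care. The count of $\vec w$ in a slab is restricted because $\Ga_\cL$ is only two-dimensional. I would handle this by fixing $\vec m$ and summing over $\vec w$ directly: the union over $\vec w\in\Ga_\cL$ of the events amounts to integrating the indicator over translates by a rank-$2$ lattice, which averages out to roughly $\eta^d\log^{d-1}$ per unit of $\vec m$-scale. Failing a clean bound there, I would sacrifice a power: the set $\{\Pi(\vec x)<\eta\}$ projected on the coordinates orthogonal to $\cL$ still needs at least one of the $n\ge1$ transversal directions small, which gives enough decay to make the series converge.
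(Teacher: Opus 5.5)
Your framework (Borel--Cantelli over $\bB$, Fubini in the column $\vec e_i$ with $|m_i|=|\vec m|$, rescaling by $m_i$) is sound. For pairs with $|\vec m|\geq c_0|\vec w|$, your bound of order $\eta^dR^{-d}\log^{d-1}R$ is correct, and it is summable over the $\ll R^d$ such pairs in each dyadic shell.

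The gap is the regime $|\vec m|\ll|\vec w|$, which decides convergence and which you leave open. There the uniform per-pair bound $\mathrm{mes}\,E(\vec m,\vec w)\ll\eta^dR^{-d}\cdot(\text{polylog})$ is false. Take $|\vec m|=1$ and $\vec w\in\Ga_\cL$ with $|\vec w|\asymp R$ and a coordinate $j$ for which $x_j$ vanishes somewhere inside $\cB_i$. The bad set then contains a slab of width $\asymp\eta^dR^{-(d-1)}$ in the variable $e_{i,j}$. Its measure is $\asymp\eta^dR^{-(d-1)}$, larger than your bound by a factor $R$ up to logarithms. So the count ``$2^{kd}$ pairs times $2^{-kd}$'' does not apply.

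Your suggested remedies do not close this:
\begin{itemize}
\item Since $\eta^d$ is only polylogarithmically small, no power of $R$ can be sacrificed.
\item The logarithmic losses are rigidly budgeted. Per dyadic shell you need a total $\ll\eta^dk^{A}$ with $A<d(1+\e)-1$ for every $\e>0$, i.e.\ essentially $A\leq d-1$. Shrinking $\e$ only makes this harder, so ``adjusting $\e$'' is no fallback.
\item Concretely, take your slab bound $|m_i|^{d-1}\eta^dR^{-(d-1)}\log^{d-1}R$ and sum it over all $\asymp R^2$ vectors $\vec w$ and all $|\vec m|\leq R$. For $d=3$ this gives only $\eta^3\log^3R$ per shell, i.e.\ the series $\sum_kk^{-3\e}$, which diverges for $\e\leq1/3$.
\item The ``single slab'' claim needs that no two coordinate functionals are proportional on $\cL$ (pairwise distinct ratios $a_i/b_i$), and the lemma does not assume this. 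If $s\geq2$ coordinates of $\vec w$ can be small simultaneously, the volume grows by a factor of at least order $(R/|\vec m|)^{s-1}$.
\end{itemize}

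The missing ingredient is a sharp count of the $\vec w$ that can contribute at all. The paper covers the dyadic shell of the forbidden set $\Pi(\vec x)^d\leq\varphi(|\vec x|)=\log^{-d-\e}(1+|\vec x|)$ by $\ll\nu^{d-2}$ boxes $\cQ(\pmb\tau)$ whose side lengths have product $\asymp\varphi(2^\nu)$. The argument then runs as follows.
\begin{itemize}
\item For a fixed box the event factorises over the rows $\vec e^j$ of $E$. For $j\geq2$, each row is confined to a slab of measure $\ll2^{\tau_j-\sigma}/|\vec z_E|$. So a box costs $\ll2^{-\nu}\varphi(2^\nu)|\vec z_E|^{1-d}$, uniformly in $\vec z_V$.
\item Some side of each box is $<1$. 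Non-emptiness therefore forces $\langle\vec v^j,\vec z_V\rangle$ into an interval of length $\ll|\vec z_E|$. Only $\ll2^\nu|\vec z_E|$ of the $\asymp2^{2\nu}$ vectors $\vec z_V$ matter.
\item Summing over $\vec z_E$ costs a single factor $\sum_{T\leq2^{\nu+1}}T^{-1}\asymp\nu$. This gives $\mu(\cM_\nu)\ll\nu^{d-1}\varphi(2^\nu)\ll\nu^{-1-\e}$, with no general-position assumption on $\cL$.
\end{itemize}
Your remark that the count of $\vec w$ in a slab is restricted points exactly here, but it has to be turned into an estimate of this precision.

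Finally, intersecting over $c=1/N$ does not make $c$ independent of $\bE$. Borel--Cantelli leaves finitely many exceptions for almost every $\bE$, so $c$ depends on $\bE$. That is also all the paper's argument yields, and all that Lemma~\ref{l:2dim_provides_all} uses.
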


\begin{proof}
  Let us fix $\e>0$. For all positive $t$, set
  \[
    \varphi(t)=\frac1{\log^{d+\e}(1+t)}\,.
  \]
  Let us define the ``forbidden'' set
  \[
    \cF=\cF(\varphi)=\Big\{\vec x\in\R^d \,\Big|\, \Pi(\vec x)^d\leq\varphi(|\vec x|),\ |\vec x|\geq1 \Big\}
  \]
  and consider the ``exceptional'' set
  \[
    \cM=\Big\{\bE\in\bB \,\Big|\, \text{the set }\big(\La_\bE\backslash\Ga_\cL\big)\cap\cF\text{ is infinite} \Big\}.
  \]
  Let us show that $\mu(\cM)=0$, where $\mu$ is the Lebesgue measure on the space $\R^{nd}$. We split our argument into several steps.
  
  \paragraph{Reduction to Borel--Cantelli lemma.}
  
  Each vector $\vec x\in\La_\bE$ enjoys a representation
  \begin{equation}\label{eq:a_point_of_La_E}
    \vec x=z_1\vec e_1+\ldots+z_n\vec e_n+z_{n+1}\vec v_1+z_{n+2}\vec v_2
  \end{equation}
  with integer $z_1,\ldots,z_{n+2}$ (recall that $n=d-2$). And conversely, for a fixed $\bE$, each $\vec z=(z_1,\ldots,z_d)\in\Z^d$ determines a vector $\vec x=\vec x(\vec z,\bE)$ of the form \eqref{eq:a_point_of_La_E}. Moreover,
  \[
    \vec x(\vec z,\bE)\in\Ga_\cL
    \iff
    \vec z\in\cV,
  \]
  where $\cV$ is the coordinate subspace of the last two coordinates.

  For each $\nu\in\Z$, $\nu\geq0$, let us consider the set
  \[
    \cM_\nu=\Big\{\bE\in\bB \,\Big|\, \exists\,\vec z\in\Z^d\backslash\cV:\,2^\nu\leq|\vec z|<2^{\nu+1},\ \vec x(\vec z,\bE)\in\cF \Big\}.
  \]
  Then
  \[
    \cM=\bigcap_{\nu_0=0}^\infty\bigcup_{\nu=\nu_0}^\infty\cM_\nu.
  \]
  By Borel--Cantelli lemma it suffices to prove the convergence of the series
  \begin{equation}\label{eq:borel_cantelli_series}
    \sum_{\nu=0}^\infty\mu(\cM_\nu).
  \end{equation}
  
  \paragraph{Sets $\cF^i_\nu$ and $\cM^i_\nu$.}  
  
  Note that if
  \[
    2^\nu\leq|\vec z|<2^{\nu+1},
  \]
  then by compactness of $\bB$ there is a positive integer $\eta$ depending only on $\bB$ such that
  \[
    2^{\nu-\eta}\leq|\vec x(\vec z,\bE)|\leq2^{\nu+\eta}.
  \]
  Set
  \[
    \cF_\nu=\Big\{\vec x\in\R^d \,\Big|\, \Pi(\vec x)^d\leq\varphi(|\vec x|),\ 2^{\nu-\eta}\leq|\vec x|\leq2^{\nu+\eta} \Big\}.
  \]
  Set also, for each $i=1,\ldots,d$,
  \[
    \cF^i_\nu=\Big\{\vec x\in\cF_\nu \,\Big|\, |\vec x|=x_i \Big\}.
  \]
  Then
  \[
    \cM_\nu\subseteq\bigcup_{i=1}^d\cM^i_\nu,
  \]
  where
  \[
    \cM^i_\nu=\Big\{\bE\in\bB \,\Big|\, \exists\,\vec z\in\Z^d\backslash\cV:\,2^\nu\leq|\vec z|<2^{\nu+1},\ \vec x(\vec z,\bE)\in\cF^i_\nu \Big\}.
  \]
  
  \paragraph{Covering $\cF^1_\nu$ by parallelepipeds.}
  
  Let us cover the set $\cF^1_\nu$ by a family of paralle\-le\-pi\-peds. This will allow us to estimate the measure of $\cM^1_\nu$\,. The measure of all the other $\cM^i_\nu$ is estimated in the same way.

  For each $\vec x=(x_1,x_2,\ldots,x_d)\in\R^d$, set $\underline{\vec x}=(x_2,\ldots,x_d)\in\R^{d-1}$.
  For each $\tau\in\Z$, $\tau\geq0$, define the $(d-1)$-dimensional set
  \[
    \cH_\tau=\Big\{\underline{\vec x}=(x_2,\ldots,x_d)\in\R^{d-1} \,\Big|\, |\underline{\vec x}|\leq2^{\tau+1},\ 
                                                                            |x_2\cdot\ldots\cdot x_d|\leq\frac{\varphi(2^\tau)}{2^\tau} \Big\}.
  \]
  Then 
  \begin{equation}\label{eq:F_1_nu_hyperbolic_covering}
    \cF^1_\nu\subset\bigcup_{\tau=\nu-\eta}^{\nu+\eta}\Big([2^\tau,2^{\tau+1}]\times\cH_\tau\Big).
  \end{equation}

  Let us cover the set $\cH_\tau$ by parallelepipeds as follows. Define an integer $\sigma=\sigma(\tau)$ by the inequalities
  \[
    2^{-\sigma-1}<
    \bigg(\dfrac{\varphi(2^\tau)}{2^\tau}\bigg)^{\frac1{d-1}}\leq
    2^{-\sigma}.
  \]
  Then
  \begin{equation}\label{eq:sigma_of_tau}
    \sigma=\sigma(\tau)=\bigg[\frac{\tau-\log_2(\varphi(2^\tau))}{d-1}\bigg].
  \end{equation}
  Consider the family of $(d-1)$-tuples of integers
  \[
    \cT=\cT(\tau)=\Big\{\underline{\pmb\tau}=(\tau_2,\ldots,\tau_d)\in\Z^{d-1} \,\Big|\, \tau_2+\ldots+\tau_d=0,\ |\underline{\pmb\tau}|\leq(d-1)(\sigma+\tau+2) \Big\}
  \]  
  and determine for each $\underline{\pmb\tau}\in\cT$ the parallelepiped
  \[
    \cP(\underline{\pmb\tau})=
    \Big\{\underline{\vec x}=(x_2,\ldots,x_d)\in\R^{d-1} \,\Big|\, |x_i|\leq2^{\tau_i-\sigma+(d-2)},\ i=2,\ldots,d \Big\}.
  \]
  Let us show that
  \begin{equation}\label{eq:H_tau_covering}
    \cH_\tau\subset\bigcup_{\underline{\pmb\tau}\in\cT}\cP(\underline{\pmb\tau}).
  \end{equation}
  Indeed, let $\underline{\vec y}\in\cH_\tau$\,, $|y_2\cdot\ldots\cdot y_d|=\dfrac{\varphi(2^\tau)}{2^\tau}$\,. Set
  \[
    \underline{\vec w}=(w_2,\ldots,w_d)=
    \underline{\vec y}\cdot2^{-\sigma}\cdot\bigg(\dfrac{\varphi(2^\tau)}{2^\tau}\bigg)^{-\frac1{d-1}}.
  \]
  The points $\underline{\vec y}$ and $\underline{\vec w}$ are proportional,
  \begin{equation}\label{eq:bounds_for_w}
    |\underline{\vec y}|\leq|\underline{\vec w}|<2|\underline{\vec y}|\leq2^{\tau+2},
    \qquad\text{ and }\qquad
    |w_2\cdot\ldots\cdot w_d|=2^{-(d-1)\sigma}.
  \end{equation}
  Set $\tau_i=\sigma+\big\lceil\log_2|w_i|\big\rceil$, $i=2,\ldots,d-1$ and $\tau_d=-\tau_2-\ldots-\tau_{d-1}$. Here $\lceil\,\cdot\,\rceil$ denotes the ceiling function. Then by \eqref{eq:bounds_for_w}
  \[
    \quad
    2^{\tau_i-\sigma-1}<|w_i|\leq2^{\tau_i-\sigma},
    \qquad\qquad\ \
    |\tau_i|\leq\sigma+\tau+2,
    \qquad
    i=1,\ldots,d-1,
  \]
  \[
    |w_d|<
    2^{\tau_d-\sigma+(d-2)},
    \qquad
    |\tau_d|\leq(d-1)(\sigma+\tau+2).
  \]
  Thus, $\underline{\vec w}$ and, therefore, $\underline{\vec y}$ belong to the parallelepiped $\cP(\underline{\pmb\tau})$. I.e., indeed, inclusion \eqref{eq:H_tau_covering} takes place.
  
  Taking into account \eqref{eq:F_1_nu_hyperbolic_covering}, we get the following covering of $\cF^1_\nu$ by parallelepipeds:
  \[
    \cF^1_\nu=
    \bigcup_{\substack{\pmb\tau=(\tau_1,\ldots,\tau_d)\,\in\,\Z^d \\ 
             \nu-\eta\,\leq\,\tau_1\,\leq\,\nu+\eta \vphantom{\frac11} \\ 
             \underline{\pmb\tau}\,\in\,\cT(\tau_1)}}
    \cQ(\pmb\tau),
    \qquad\text{ where }\qquad
    \cQ(\pmb\tau)=
    [2^{\tau_1},2^{\tau_1+1}]\times\cP(\underline{\pmb\tau}).
  \]
  
  This covering in turn gives the following covering of the set $\cM^1_\nu$:
  \[
    \cM^1_\nu=
    \bigcup_{\substack{\pmb\tau=(\tau_1,\ldots,\tau_d)\,\in\,\Z^d \\ 
             \nu-\eta\,\leq\,\tau_1\,\leq\,\nu+\eta \vphantom{\frac11} \\ 
             \underline{\pmb\tau}\,\in\,\cT(\tau_1)}}
    \bigcup_{\substack{\vec z\in\Z^d\backslash\cV \\ 
             \,\ 2^\nu\leq|\vec z|<2^{\nu+1}}}
    \cN(\vec z,\pmb\tau),
  \]
  where
  \[
    \cN(\vec z,\pmb\tau)=
    \Big\{\bE\in\bB \,\Big|\, \vec x(\vec z,\bE)\in\cQ(\pmb\tau) \Big\}.
  \]
  Thus, the measure of $\cM^1_\nu$ can be estimated as
  \begin{equation}\label{eq:estimate_by_measure_of_N}
    \mu(\cM^1_\nu)\leq
    \sum_{\tau_1=\nu-\eta}^{\nu+\eta}\phantom{1}
    \sum_{\underline{\pmb\tau}\,\in\,\cT(\tau_1)}
    \sum_{\substack{\vec z\in\Z^d\backslash\cV \\ 
                    \,\ 2^\nu\leq|\vec z|<2^{\nu+1}}}
    \mu(\cN(\vec z,\pmb\tau)).
  \end{equation}
  
  \paragraph{\bf Analysis of $\cN(\vec z,\pmb\tau)$}
  
  Let us write the coordinates of $\vec e_1,\ldots,\vec e_n$ and $\vec v_1,\vec v_2$ in the columns of matrices $E$ and $V$, respectively:
  \[
    E=
    \begin{pmatrix}
      e_{1,1} & \cdots & e_{n,1} \\
      \vdots  & \ddots & \vdots \\
      e_{1,d} & \cdots & e_{n,d}
    \end{pmatrix},
    \qquad
    V=
    \begin{pmatrix}
      v_{1,1} & v_{2,1} \\
      \vdots  & \vdots \\
      v_{1,d} & v_{2,d}
    \end{pmatrix}.
  \]
  Denote by $\vec e^1,\ldots,\vec e^d$ the rows of $E$ and by $\vec v^1,\ldots,\vec v^d$ -- the rows of $V$. Let us set for $\vec z=(z_1,\ldots,z_d)$
  \[
    \vec z_E=(z_1,\ldots,z_n)
    \qquad\text{ and }\qquad
    \vec z_V=(z_{n+1},z_{n+2}).
  \]
  Let us denote by $\langle\,\cdot\,,\,\cdot\,\rangle$ the inner product. Then the condition $\vec x(\vec z,\bE)\in\cQ(\pmb\tau)$ is equivalent to the system of inequalities
  \begin{equation}\label{eq:forbidden_set_system}
    \begin{cases}
      2^{\tau_1}\leq\langle\vec e^1,\vec z_E\rangle+\langle\vec v^1,\vec z_V\rangle\leq2^{\tau_1+1} \\
      |\langle\vec e^2,\vec z_E\rangle+\langle\vec v^2,\vec z_V\rangle|\leq2^{\tau_2-\sigma+(d-2)} \\
      \ \cdot\cdot\cdot\cdot\cdot\cdot\cdot\cdot\cdot\cdot\cdot\cdot\cdot\cdot\cdot\cdot\cdot\cdot\cdot\cdot\cdot\cdot\cdot\cdot\cdot\cdot\cdot \\
      |\langle\vec e^d,\vec z_E\rangle+\langle\vec v^d,\vec z_V\rangle|\leq2^{\tau_d-\sigma+(d-2)}
    \end{cases},
  \end{equation}
  where $\sigma=\sigma(\tau_1)$ is defined by \eqref{eq:sigma_of_tau}.
  
  For every $j=1,\ldots,d$, the set of all possible values of $\vec e^j$, i.e. the set
  \[
    \Big\{\vec e^j \,\Big|\, \bE\in\bB \Big\},
  \]
  is a unit cube in $\R^n$. Denote this cube by $\cB^j$. Then
  \[
    \cN(\vec z,\pmb\tau)=
    \cN^1(\vec z,\pmb\tau)\times\ldots\times\cN^d(\vec z,\pmb\tau),
  \]
  where
  \[
    \cN^1(\vec z,\pmb\tau)=
    \Big\{\vec e^1\in\cB^1 \,\Big|\, 2^{\tau_1}\leq\langle\vec e^1,\vec z_E\rangle+\langle\vec v^1,\vec z_V\rangle\leq2^{\tau_1+1} \Big\}
  \]
  and
  \[
    \cN^j(\vec z,\pmb\tau)=
    \Big\{\vec e^j\in\cB^j \,\Big|\, |\langle\vec e^j,\vec z_E\rangle+\langle\vec v^j,\vec z_V\rangle|\leq2^{\tau_j-\sigma+(d-2)} \Big\}
  \]
  for $j=2,\ldots,d$.
  
  We estimate roughly the measure of $\cN^1(\vec z,\pmb\tau)$:
  \[
    \mu(\cN^1(\vec z,\pmb\tau))\leq1.
  \]
  
  The set $\cN^j(\vec z,\pmb\tau)$ for a given $j=2,\ldots,d$, in general, can be empty. If it is not empty, its measure can be obviously estimated as
  \[
    \mu(\cN^j(\vec z,\pmb\tau))\ll
    \frac{2^{\tau_j-\sigma+(d-2)}}{|\vec z_E|}\ll_d
    \frac{2^{\tau_j-\sigma}}{|\vec z_E|}\,.
  \]
  Since $\tau_2+\ldots+\tau_d=0$ and $\sigma=\sigma(\tau_1)$ is defined by \eqref{eq:sigma_of_tau}, we get
  \begin{equation}\label{eq:estimating_measure_of_N}
    \mu(\cN(\vec z,\pmb\tau))\ll_d
    \frac{2^{-(d-1)\sigma}}{|\vec z_E|^{d-1}}\ll_d
    \frac{2^{-\tau_1+\log_2(\varphi(2^{\tau_1}))}}{|\vec z_E|^{d-1}}=
    \frac{2^{-\tau_1}\cdot\varphi(2^{\tau_1})}{|\vec z_E|^{d-1}}\,.
  \end{equation}
  Note that this estimate is uniform in $\underline{\pmb\tau}$\,. And it is nontrivial only in the case when each of the sets $\cN^j(\vec z,\pmb\tau)$, $j=2,\ldots,d$, is not empty.
  
  \paragraph{Estimating the number of nonempty $\cN^j(\vec z,\pmb\tau)$ for a fixed $\vec z_E$.}
  
  Recall that the vectors $\vec v^j$, $j=2,\ldots,d$, are fixed and distinct from the zero vector (as the coordinates of $\vec v_1$, $\vec v_2$ are positive). Let us fix a vector $\vec z_E$, an index $j$, $2\leq j\leq d$, and a positive constant $C$. Let us ask ourselves what is the cardinality of the set
  \[
    \cR^j(\vec z_E,C)=
    \Big\{\vec z_V\in\Z^2 \,\Big|\, |\vec z_V|\leq C,\ \exists\,\vec e\in\cB^j:\ |\langle\vec e,\vec z_E\rangle+\langle\vec v^j,\vec z_V\rangle|\leq1 \Big\}.
  \]
  The set of values of $\langle\vec e,\vec z_E\rangle$ as $\vec e$ ranges through the cube $\cB^j$ is a segment of length of order $|\vec z_E|$. Hence the set $\cR^j(\vec z_E,C)$ is contained in a rectangle with sides of order $C$ and $|\vec z_E|$. The number of integer points in such a rectangle is bounded by a quantity of the order of its area, i.e.
  \[
    |\cR^j(\vec z_E,C)|\ll C|\vec z_E|.
  \]
  Finally, we note that since
  \[
    \sum_{j=2}^{d}\big(\tau_j-\sigma+(d-2)\big)=-(d-1)\sigma+O(d)<-\tau_1+\log_2(\varphi(2^{\tau_1}))+O(d),
  \]
  and $\tau_1$ is large, there is $j$ such that $2^{\tau_j-\sigma+(d-2)}<1$. Hence, for the set
  \[
    \cR(\vec z_E,C)=
    \bigcap_{j=2}^d\cR^j(\vec z_E,C),
  \]
  we have
  \begin{equation}\label{eq:estimate_on_R}
    |\cR(\vec z_E,C)|\ll_\bB C|\vec z_E|.
  \end{equation}
  
  \paragraph{Estimating the measure of $\cM_\nu$.}
  
  By \eqref{eq:estimate_by_measure_of_N}, \eqref{eq:estimating_measure_of_N}, \eqref{eq:estimate_on_R} we get the following estimate for the measure of $\cM^1_\nu$\,:
  \begin{align*}
    \mu(\cM^1_\nu)\, & \leq
    \sum_{\tau_1=\nu-\eta}^{\nu+\eta}\phantom{1}
    \sum_{\underline{\pmb\tau}\,\in\,\cT(\tau_1)}
    \sum_{\substack{\vec z\in\Z^d\backslash\cV \\ 
                    \,\ 2^\nu\leq|\vec z|<2^{\nu+1}}}
    \mu(\cN(\vec z,\pmb\tau))
    \ll_\bB \\ & \ll_\bB
    \sum_{\tau_1=\nu-\eta}^{\nu+\eta}\phantom{1}
    \sum_{\underline{\pmb\tau}\,\in\,\cT(\tau_1)}
    \sum_{\substack{\vec z_E\in\Z^n \\ 
                    \,\ 0<|\vec z_E|<2^{\nu+1}}}
    \sum_{\vec z_V\in\cR(\vec z_E,2^{\nu+1})}
    \frac{2^{-\tau_1}\cdot\varphi(2^{\tau_1})}{|\vec z_E|^{d-1}}
    \ll_\bB \\ & \ll_\bB
    \sum_{\tau_1=\nu-\eta}^{\nu+\eta}
    |\cT(\tau_1)|\cdot2^{-\tau_1}\cdot\varphi(2^{\tau_1})
    \hskip-5mm
    \sum_{\substack{\vec z_E\in\Z^n \\ 
                    \,\ 0<|\vec z_E|<2^{\nu+1}}}
    \frac{2^{\nu+1}}{|\vec z_E|^{d-2}}
    \ll_\bB \\ & \ll_\bB
    \sum_{\tau_1=\nu-\eta}^{\nu+\eta}
    \big(d\tau_1-\log_2(\varphi(2^{\tau_1})\big)^{d-2}\cdot2^{-\tau_1+\nu+1}\cdot\varphi(2^{\tau_1})
    \sum_{T=1}^{2^{\nu+1}}
    \sum_{\substack{\vec z_E\in\Z^n \\ 
                    |\vec z_E|=T}}
    \frac1{|\vec z_E|^{d-2}}
    \ll_\bB \\ & \ll_\bB
    \Big(\nu+\big|\log_2\big(\varphi(2^{\nu+\eta})\big)\big|\Big)^{d-2}\hskip-1mm\cdot\varphi(2^{\nu-\eta})
    \sum_{T=1}^{2^{\nu+1}}\frac1T
    \ll_\bB \\ & \ll_\bB
    \ \nu\cdot\varphi(2^{\nu-\eta})\cdot\Big(\nu^{d-2}+\big|\log_2(\varphi\big(2^{\nu+\eta})\big)\big|^{d-2}\Big).
    \vphantom{\frac{\Big|}{}}
  \end{align*}
  
  The measure of the sets $\cM^2_\nu,\ldots,\cM^d_\nu$ is estimated in the same way. Hence
  \[
    \mu(\cM_\nu)\,\ll_\bB\,
    \varphi(2^{\nu-\eta})\cdot\Big(\nu^{d-1}+\nu\big|\log_2(\varphi\big(2^{\nu+\eta})\big)\big|^{d-2}\Big),
  \]
  where $\eta$ is a positive constant depending on $\bB$. Taking into account that
  \[
    \varphi(t)=\frac1{\log^{d+\e}(1+t)}\,,
  \]
  we get
  \[
    \mu(\cM_\nu)\ll_\bB\nu^{-1-\e},
  \]
  i.e. the series \eqref{eq:borel_cantelli_series} converges. It follows by Borel--Cantelli lemma that the measure of the ``exceptional'' set $\cM$ is equal to zero. Thus, for almost every $\bE\in\bB$ the inequality $\Pi(\vec x)^d\leq\varphi(|\vec x|)$ holds only for a finite number of points of the set $\La_\bE\backslash\Ga_\cL$. Which immediately implies the desired statement.
\end{proof}

\section{Proof of Theorems \ref{t:spectrum_of_the_regular_arbitrary_d} and \ref{t:spectrum_of_the_weak_arbitrary_d}}\label{sec:proofs}

We prove both theorems using a common scheme. After choosing numbers $\theta,\eta\in\R$ appropriately, we consider the lattice
\[
  \Ga_{\theta,\eta}=
  \begin{pmatrix}
    \theta & -1 \\
    1 & \phantom{-}\eta
  \end{pmatrix}
  \Z^2.
\]
We construct $\bL=(\ell_1,\ldots,\ell_n)$ as in Section \ref{sec:rank_2_sublattice_construction}. By Lemma \ref{l:providing_log_bad} there exists $\tau\in\R$ such that $\bL$ is logarithmically badly approximable w.r.t. the lattice $D_\tau\Ga_{\theta,\eta}$. We take such a $\tau$ and set
\[
  \Ga=D_\tau\Ga_{\theta,\eta}\,.
\]
Clearly, the hyperbolic rotation $D_\tau$ preserves Diophantine exponents of a lattice, i.e.
\begin{equation}\label{eq:hyperbolic_rotation_preserves_omegas}
  \omega(\Ga)=
  \omega(\Ga_{\theta,\eta}).
\end{equation}
As for the hyperbolic minima of the lattice $\Ga_{\theta,\eta}$ w.r.t. the norm $|\,\cdot\,|$, they are mapped to the hyperbolic minima of the lattice $\Ga$ w.r.t. the norm $|\,\cdot\,|_\tau=|D_\tau(\,\cdot\,)|$. Thus, $\Ga_{\theta,\eta}$ satisfies condition \eqref{eq:beta_growth_of_successive_hyperbolic_minima} of Lemma \ref{l:from_bar_omega_to_bar_omega_L} if and only if so does $\Ga$.

Next, we construct a subspace $\cL$ and a lattice $\Ga_\cL$ as in Section \ref{sec:rank_2_sublattice_construction}. We choose $\bE=(\vec e_1,\ldots,\vec e_n)$ according to the statement of Lemma \ref{l:main_metric_argument} and set
\[
  \La=\La_\bE\,.
\]
Then by Lemma \ref{l:2dim_provides_all}
\begin{equation}\label{eq:2dim_provides_all}
  \omega(\La)=\max(0,\omega_\bL(\Ga)),\qquad
  \bar\omega(\La)=\max(0,\bar\omega_\bL(\Ga)),
\end{equation}
and we finish the proof by applying Lemmas \ref{l:from_omega_to_omega_L}, \ref{l:from_bar_omega_to_bar_omega_L}.

\paragraph{Choosing $\theta$ and $\eta$ to prove Theorem \ref{t:spectrum_of_the_regular_arbitrary_d}.}

Given $\beta\geq0$, we set $a_0=1$ and $a_{k+1}=[q_k^\beta]+1$, for each integer $k\geq0$, where $q_k$ is the denominator of the convergent $p_k/q_k=[a_0;a_1,a_2,\ldots,a_k]$. Consider the number $\theta=[a_0;a_1,a_2,\ldots]$ and the lattice $\Ga_{\theta,\theta}$\,. Then, as it was shown in \cite{german_sbornik_2026} (see also \cite{german_izv_2026}),
\[
  \omega(\Ga_{\theta,\theta})=\delta,
  \qquad\text{ where }\qquad
  \delta=\frac{\beta-1}2\,.
\]
By \eqref{eq:hyperbolic_rotation_preserves_omegas} we have $\omega(\Ga)=\omega(\Ga_{\theta,\theta})=\delta$. Thus, for $\delta\geq n/2$, i.e. for $\beta\geq n+1$, according to \eqref{eq:2dim_provides_all} and the statement of Lemma \ref{l:from_omega_to_omega_L}, we get
\[
  \omega(\La)=\omega_\bL(\Ga)=
  \dfrac{2\delta-n}{n+2}=
  \dfrac{\beta-n-1}{n+2}\,.
\]
Therefore, as $\beta$ ranges through $[n+1,+\infty]$, the exponent $\omega(\La)$ ranges through $[0,+\infty]$, which proves Theorem \ref{t:spectrum_of_the_regular_arbitrary_d}.

\paragraph{Choosing $\theta$ and $\eta$ to prove Theorem \ref{t:spectrum_of_the_weak_arbitrary_d}.}

Given $\beta>1$, we set $a_0=b_0=1$, $a_1=2$, $b_1=[2^\beta]+1$, and
\begin{equation}\label{eq:next_partial_qoutients_definition}
  a_k=\bigg[\frac{s_{k-1}^\beta-q_{k-2}}{q_{k-1}}\bigg]+1,
  \qquad
  b_k=\bigg[\frac{q_k^\beta-s_{k-2}}{s_{k-1}}\bigg]+1,
\end{equation}
for every integer $k\geq2$, where $q_k$ is the denominator of the convergent $p_k/q_k=[a_0;a_1,a_2,\ldots,a_k]$ and $s_k$ is the denominator of the convergent $r_k/s_k=[b_0;b_1,b_2,\ldots,b_k]$. Consider the numbers $\theta=[a_0;a_1,a_2,\ldots]$, $\eta=[b_0;b_1,b_2,\ldots]$ and the lattice $\Ga_{\theta,\eta}$\,. Then, as it was shown in \cite{german_sbornik_2026}, the sequence $(\vec x_k)_{k\in\N}$ of hyperbolic minima of $\Ga_{\theta,\eta}$ satisfies the relations
\[
  |\vec x_{k+1}|\asymp|\vec x_k|^\beta\,,
  \qquad
  \Pi(\vec x_k)\asymp|\vec x_k|^{(1-\beta^2)/2}.
\]
Thus, for $\beta\geq\sqrt{n+1}$, the lattice $\Ga_{\theta,\eta}$ satisfies condition \eqref{eq:beta_growth_of_successive_hyperbolic_minima} of Lemma \ref{l:from_bar_omega_to_bar_omega_L}. Then, as we mentioned before, $\Ga$ also satisfies condition \eqref{eq:beta_growth_of_successive_hyperbolic_minima} of Lemma \ref{l:from_bar_omega_to_bar_omega_L}. Hence, for $\beta\geq\sqrt{n+1}$, according to \eqref{eq:2dim_provides_all} and the statement of Lemma \ref{l:from_bar_omega_to_bar_omega_L}, we get
\[
  \bar\omega(\La)=\bar\omega_\bL(\Ga)=
  \dfrac{\beta-(n+1)\beta^{-1}}{n+2}\,.
\]
Therefore, as $\beta$ ranges through $\big[\sqrt{n+1},+\infty\big]$, the exponent $\bar\omega(\La)$ ranges through $[0,+\infty]$, which proves Theorem \ref{t:spectrum_of_the_weak_arbitrary_d}.

\section*{Acknowledgements}
The author is a winner of the ``Leader'' contest con\-duct\-ed by the Foundation for the Advancement of Theoretical Physics and Mathematics ``BASIS'' and would like to thank its sponsors and jury.

\end{document}